\newtheorem{proposition}{Proposition}
\newtheorem{theorem}{Theorem}
\theoremstyle{definition} 
\newtheorem{defi}{Definition}
\newtheorem{lemma}{Lemma}
\theoremstyle{remark} 
\newtheorem{rem}{Remark}
\newtheorem*{notation}{Notation}
\newcommand{\mat}{$\textbf{Mat}_{\Bbbk}$}
\newcommand{\vect}{$\textbf{2Vec }$}
\newcommand{\id}{id}
\newcommand{\xRrightarrow}[2][]{\ext@arrow 0359\Rrightarrowfill@{#1}{#2}}
\newcommand{\Rrightarrowfill@}{\arrowfill@\equiv\equiv\Rrightarrow}
\newcommand{\xLleftarrow}[2][]{\ext@arrow 3095\Lleftarrowfill@{#1}{#2}}
\newcommand{\Lleftarrowfill@}{\arrowfill@\Lleftarrow\equiv\equiv}
\newcommand{\xLleftRrightarrow}[2][]{\ext@arrow 3399\LleftRrightarrowfill@{#1}{#2}}
\newcommand{\LleftRrightarrowfill@}{\arrowfill@\Lleftarrow\equiv\Rrightarrow}
\def\tsc#1{\csdef{#1}{\textsc{\lowercase{#1}}\xspace}}
\begin{document}
\let\WriteBookmarks\relax
\def\floatpagepagefraction{1}
\def\textpagefraction{.001}
\title{Typing Tensor Calculus in 2-Categories (I)}
\author{Fatimah Rita Ahmadi\\University of Oxford\\ f.ahmadi@imperial.ac.uk}
\date{January 2025}
\maketitle
\begin{abstract}
To formalize calculations in linear algebra for the development of efficient algorithms and a framework suitable for functional programming languages and faster parallelized computations, we adopt an approach that treats elements of linear algebra, such as matrices, as morphisms in the category of matrices, $\mathbf{Mat_{k}}$. This framework is further extended by generalizing the results to arbitrary monoidal semiadditive categories. To enrich this perspective and accommodate higher-rank matrices (tensors), we define semiadditive 2-categories, where matrices $T_{ij}$ are represented as 1-morphisms, and tensors with four indices $T_{ijkl}$ as 2-morphisms. This formalization provides an index-free, typed linear algebra framework that includes matrices and tensors with up to four indices. Furthermore, we extend the framework to monoidal semiadditive 2-categories and demonstrate detailed operations and vectorization within the 2-category of \vect introduced by Kapranov and Voevodsky.
\end{abstract}

\section{Introduction}
Algebraic programming proposes an approach to programming based on algebraic structures such as rings, fields, groups, and, more recently, categories. The aim is to incorporate these concepts at various stages of programming and in different contexts, such as concurrency or database analysis. These ideas are beautifully explained by Richard Bird and Oege de Moor in their book~\cite{bird1996algebra}. This field lies at the intersection of algebra, computer science, and logic, and it also includes formal and constructive mathematics, for this purpose check Girard, Lafont and Taylor's book~\cite{girard1989proofs}. A well-known example of such an approach is relational algebra for database queries. Another relevant example is the use of categorical constructs such as monad or functor in functional programming languages like Haskell, or proof assistants such as Isabelle~\cite{paulson2019formalising}, Coq~\cite{huet1997coq}, and Lean~\cite{moura2021lean}. With the rise of proof assistants, this approach has regained popularity, for instance see Terence Tao's paper~\cite{tao2024machine}; thus, expressing mathematical objects in terms of category theory or type theory offers a seamless transition to more formalized structures used in these proof assistants, check~\cite{moura2021lean, program2013homotopy}.

From a programming perspective, one benefit of thinking about programming in terms of category theory concepts is succinctly captured by Milewski: \textit{The essence of both is composition}, with the added benefit that category theory naturally encourages thinking about types, morphisms, and composition, aligning well with how skilled programmers should approach problem-solving~\cite{milewski2018category}. Additionally, category theory unifies all notions of functional programming into a well-defined and rigorous framework. Another area of interest focuses on formalizing mathematical entities in a more abstractly typed and point-free system for efficiency. Linear algebra is one such example. It serves as the primary toolbox in many fields, such as quantum physics, cryptography, and machine learning, to name a few. The focus of our paper is on this direction: typing elements of linear algebra within suitable categories and exploiting categorical properties and structures to perform more efficient computations.

Apart from implementing linear algebra algorithms, a recent proposal in the literature suggests a categorical framework for unifying two main approaches to deep neural networks, as discussed by Gavranovi{\'c} et.al.~\cite{gavranovicposition}. These approaches are typically described as "bottom-up" (implementation) and "top-down" (constraint satisfaction). The bottom-up approach treats neural networks at the tensorial level, using automatic differentiation packages such as TensorFlow~\cite{abadi2016tensorflow}. The paper introduces an approach based on the universal algebra of monads valued in a 2-category of parametric maps. I believe the framework presented in my paper complements their work, specifically with respect to RNNs and Appendix H, where a typing for neural network weights is introduced.

Typing matrices as morphisms in a category was noted early on as an example by Mac Lane in his book~\cite{mac2013categories}. The category of matrices, $\mathbf{Mat}$, which includes natural numbers as objects and matrices as morphisms, was a key example. This approach was further expanded by Macedo and Oliveira~\cite{macedo2013typing}, who explained the details of linear algebra algorithms, such as Gaussian elimination with a biproduct approach, within a category with finite biproducts. The cornerstone of results of my paper and Macedo and Oliveira is vectorization which is an implication of the \textbf{currying map} in a closed monoidal setting (category and 2-category). 

The vectorization is used to increase the efficiency of calculations. For example, a powerful feature of TensorFlow (a package for deep learning) and NumPy (a Python package for linear algebra) is their use of vectorization. Vectorization allows for much faster data processing compared to for-loops. These packages leverage the Single Instruction, Multiple Data (SIMD) capabilities in modern CPUs, enabling parallel computations with fewer operations. This approach also facilitates thread-level parallelism. On systems with multiple cores, libraries can distribute operations across threads, running tasks simultaneously on different units~\cite{lam1991cache}.

Another reason for faster data processing is cache friendliness. In computer systems, a cache is a small, high-speed storage area located near the CPU. Its purpose is to store frequently accessed data and instructions, enabling faster retrieval compared to accessing the main memory (RAM). Caching helps improve performance by reducing data access times. Vectorized operations typically access data in contiguous memory blocks, which is optimal for caching. In contrast, loops may access data in a less predictable pattern, leading to more cache misses~\cite{lam1991cache}.

Developing Mac Lane and Macedo and Oliveira's project to include higher-rank matrices, or tensors, motivates my paper. The term "tensor" has been widely used in the literature for various purposes. In the current context, a tensor is an umbrella term for scalars, vectors, and matrices. Scalars are tensors with no indices, vectors are tensors with one index, and matrices are tensors with two indices. For tensors with more than two indices, we collectively refer to them as tensors.

To take the first step toward a categorical typing of tensors, one must identify a suitable category. The category of matrices, \mat, which captures the structures of a semi-additive category, provides a useful intuition. By translating \mat into the language of higher categories, we observe vectors as objects, matrices as 1-morphisms, and tensors with at most four indices as 2-morphisms. This simple observation suggests that the suitable category for our purposes should be a 2-category with extra structures that allow us to recover all properties of the category \mat from its Hom-categories, i.e., a 2-category with an object. This 2-category needs to have well-defined biproducts between objects, biproducts between 1-morphisms, and additions between 2-morphisms. Moreover, these additions should be consistent and interrelated. This generalization leads us to the definition of semiadditive 2-categories.

The current literature lacks a rigorous definition of biproducts in 2-categories. In this paper, we define 2-biproducts (or biproducts in 2-categories) and describe the properties of semi-additive 2-categories. We propose both algebraic and limit-form definitions of biproducts and demonstrate their compatibility. Using this 2-category as a foundational framework, we outline the details of typing tensor calculus.

Following the same spirit as in the categorical case, where the guiding category was \mat, we need a simple example of a 2-category that possesses all the properties of our proposed definition for semiadditive 2-categories. This example will help provide intuition throughout the paper. Fortunately, such an example exists in the literature and has been studied extensively, although not specifically as a 2-category with biproducts. Kapranov and Voevodsky construct this 2-category, called $\mathbf{2Vec}$, as an example of their definition for symmetric monoidal 2-categories~\cite{kapranov19942}. The objects of $\mathbf{2Vec}$ are natural numbers, the 1-morphisms are matrices between natural numbers whose entries are finite-dimensional vector spaces, and the 2-morphisms are linear maps between vector spaces. It is widely believed that $\mathbf{2Vec}$ also serves as a prototype for semi-additive 2-categories. However, due to the lack of a rigorous definition for semi-additive 2-categories, this claim has not been closely examined. In addition, they did not spell out the details of compositions and left it to the reader. In this paper, we explain the details of compositions and tensorial operations. Similar to the categorical case, we examine the vectorization procedure and demonstrate how rich 2-categorical structures provide us with two stages of vectorization: one at the level of objects, 1-morphisms, and 2-morphisms, and the other internal to Hom-categories and at the level of 1- and 2-morphisms.

\begin{rem}
We assume a primary knowledge of category theory including definitions of category and bicategory, concepts of duality and universality. For a concise review of bicategories, consult Leinster's paper~\cite{leinster1998basic}. By a 2-category, we mean a strict 2-category: bicategories whose associators and unitors are identities.  
\end{rem}

\begin{rem}
We also assume the reader is familiar with monoidal categories introduced by Mac Lane~\cite{mac2013categories}. In the 2-categorical case, although we do not evoke the properties of monoidal 2-category explicitly, we assume such a structure while defining tensor products of higher-rank tensors. For a review on monoidal 2-categories see my paper~\cite{ahmadi2020monoidal}. 
\end{rem}

\begin{rem}
We use the pasting lemma for 2-categories introduced by Power in his work~\cite{power19902} whenever we intend to recognize equal 2-morphisms in proofs, such as in the proof of Proposition~\ref{pro-dist}. 
\end{rem}

\begin{notation}
Capital letters, $A, B, \ldots$ are reserved for objects, small letters $f, g, \ldots$ for 1-morphisms, and Greek letters $\alpha, \beta, \ldots$ for 2-morphisms. The horizontal composition is denoted by juxtaposition, and vertical composition of 2-morphisms by ``odot" $\odot$. Plus sign $+$ represents addition of morphisms in categories and addition of 2-morphisms in 2-categories. Addition (biproduct) of objects in categories and addition of 1-morphisms in 2-categories are shown by ``oplus" $\oplus$. Finally, addition of objects in 2-categories or 2-biproducts is represented by ``boxplus" $\boxplus$.

For every object $A$, the identity 1-morphism or 1-identity is shown by $id_A$, and for every 1-morphism $f$, the identity 2-morphism or 2-identity by $1_f$. Whenever it is clear from the context, in the horizontal composition of 1-identities with 2-morphisms, we leave out 1 and use the whiskering convention $1_f \circ \alpha = f \alpha$. 
\end{notation}

\begin{rem}
	If you are reading this paper in monochromatic, in Figures \ref{fig:product2} and \ref{fig:product} and Lemma \ref{lemma4}, red lines are dashed and blue lines are solid. Colors in other pictures do not play an important role.  
\end{rem}

\begin{rem} \textbf{(Important)}
Our paper should convince you that in a (semiadditive) n-category, \textbf{n-morphisms are tensors of rank $2^n$}, check Figure~\ref{fig:tensor-n-rank}.
\end{rem}
\begin{figure}[h]
\centering
\begin{tikzpicture}
    \node (0) at (0, 0) {$i$};
    \node (1) at (4, 0) {$j$}; 
    \node (2) at (2, 0.8) {};
    \node (3) at (2, -0.8) {};
    \node (4) at (1.6, 0) {};
    \node (5) at (2.4, 0) {};
    \node (6) at (8, 0) {$\aleph_{(iji'j')_\theta}^{(iji'j')_\alpha}: \theta_{ij}^{i'j'} \xRrightarrow{} \alpha_{ij}^{i'j'}$};
    \node (7) at (12, 0){$\dots$};

    \draw[->, bend right=45] (0) to node[below]{$M'_{ij}$} (1);
    \draw[->, bend left=45] (0) to node[above]{$M_{ij}$} (1);
   \draw[->, double, bend right=50] (2) to node [left] {$\theta_{ij}^{i'j'}$} (3);
    \draw[->, double, bend left=50] (2) to node [right] {$\alpha_{ij}^{i'j'}$} (3);
    \draw[-] (4) to node[above]{$\aleph$} (5);
    
\end{tikzpicture}
\caption{n-morphisms are tensors of rank $2^n$.}\label{fig:tensor-n-rank}
\end{figure}

\section{Outlines}
The paper is divided into two main sections; Section~\ref{sec:1-dimension} is mainly a review of typing matrices in categories and Section~\ref{sec:2-dim} is our work for expanding the setup to 2-categories and typing tensors. The two sections are designed parallel; meaning, in Section~\ref{sec:2-dim}, we will define the 2-categorical notions of all concepts explained in Section~\ref{sec:1-dimension}.

In the categorical case, we will discuss the definitions of biproducts in categories, establish a definition for semiadditive monoidal categories, discuss typing matrices in the category of matrices, \mat, and conclude with vectorization using the currying map. We will briefly discuss a few linear algebra algorithms in this framework summarized from~\cite{macedo2013typing}. 

In the 2-categorical case, we will define algebraic and limit-form definitions for biproducts in 2-categories and show they are equivalent definitions. We will define semiadditive 2-categories and discuss typing matrices and tensors up to four indices in this 2-category. We will give the details of tensorial operations in a 2-category called \vect and conclude by introducing a currying map and vectorization of tensors in symmetric monoidal semiadditive 2-categories.  We finish by spelling out the details of vectorization in \vect. 

\section{Typing matrices in categories: categorical case}\label{sec:1-dimension}
This section primarily follows the results of Macedo and Oliveira~\cite{macedo2013typing}; however, they do not discuss the origin of the definition of biproducts. Since we intend to define a 2-biproduct in 2-categories and to preserve the self-sufficiency of our paper, and to establish a suitable 2-categorical framework, we follow the categorical definition of biproducts step by step. The review part of semiadditive categories is comprehensive and provides enough intuition for readers when we switch to 2-categories. 
\subsection{Semiadditive categories}\label{1-d}
Semiadditive categories are the generalization of the category of matrices, \mat. They generalize and capture important structures and properties of the category of matrices, particularly, those important for typing linear algebra. They include a well-defined addition between objects which follows an important categorical notion, i.e. universality. 

We start from the definition of constant morphisms, which are essentially morphisms that behave like an absorbent whose composition with any morphism creates a constant morphism. These are rather specific morphisms which a category might not have. We define constant morphisms to define algebraic definition of biproducts in a category, in this case, all hom-sets of a category have constant morphisms. We then continue with defining zero morphisms which are absorbent from both left and right directions. We then define biproducts and reexamine category of matrices in the light of the definition of semiadditive categories and biproducts. This section is mainly based on Mac Lane's and Borceux's books~\cite{mac2013categories, borceux1994handbook}. 
\subsubsection{Constant morphisms in categories}
 A constant morphism is a specific morphism whose left and right composites for all composable morphisms are equivalent or in other words it is an absorbent morphism.  Generally, a category can have constant morphisms for some pairs of objects; however, it could be the case that not all hom-sets have constant morphisms. 

If all hom-sets have constant morphisms, they should be compatible and composed uniquely. Meaning, if we compose a constant morphism with another morphism, the resulting constant morphism should be exactly the constant morphism of the composite set. Having the definition of constant morphisms, one can define a category whose hom-sets are monoids, where constant morphisms act as the monoid identity element. Such a category with one object, defines a monoid. In Haskell, to define monoids, we have the following code in which the constant morphism has type "nothing", i.e. \textbf{mempty}. 
\begin{lstlisting}
    class Monoid m where
    mappend :: m -> m -> m
    mempty :: m
\end{lstlisting}
\begin{defi}
A morphism $f:X\longrightarrow Y$ is \textit{left constant} if for every object $W$ and every pair of morphisms $h, g: W \longrightarrow X$, the right composition with $f$ is the same, $f g = f h$. 
\end{defi}
\begin{defi}
	A morphism $f: X\longrightarrow Y$ is \textit{right constant} if for every object $Y$ and for every pair of morphisms $h, g: Y \longrightarrow Z$ , the left composition with $f$ is the same, $g f = h f$. 
\end{defi}
\begin{defi}
	A morphism is \textit{constant} if it is both left and right constant. 
\end{defi}
If all hom-sets of a category have constant morphisms, when composing constant morphisms successively, we expect the outcome of composition to be a constant morphism. The following definition indicates the compatibility condition between constant morphisms in hom-sets. 
\begin{defi} 
A \textit{locally pointed category} is a category whose hom-sets for every pair of objects $(A, B)$ have constant morphisms $\star_{B, A}:A \longrightarrow B$ if for an arbitrary pair of morphisms $A \xrightarrow{f} B\xrightarrow{g} C$, such that,
\begin{equation}\label{eq:locally-pointed}
g\star_{B,A} = \star_{C, A} = \star_{C, B}f
\end{equation}
\end{defi}
One immediate question is whether the constant morphisms are unique. Meaning, if one composes two constant morphisms, would one obtain the constant morphism of the resultant hom-set? In other words, under which condition hom-sets only have on constant morphism. The answer lies on the condition defined for locally pointed category, for a locally pointed category, it is indeed the case and the aforementioned condition, Equation~\ref{eq:locally-pointed} guarantees the uniqueness. 
\begin{proposition}\label{pro1}
In a locally pointed category, the family of constant morphisms is unique. 
\end{proposition}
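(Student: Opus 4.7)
The plan is to show that if both $\{\star_{B,A}\}_{A,B}$ and $\{\star'_{B,A}\}_{A,B}$ are families of constant morphisms satisfying the compatibility diagram (\ref{fig1.1}), then they coincide entry-wise. The strategy is to analyze the composite $\star'_{B,B} \circ \star_{B,A} : A \to B$ in two ways, once using the compatibility of the $\star$-family and once using that of the $\star'$-family.

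First, I would apply the compatibility diagram for $\star$ to the composable pair $A \xrightarrow{\star_{B,A}} B \xrightarrow{\star'_{B,B}} B$: instantiating $g := \star'_{B,B}$ in Diagram (\ref{fig1.1}) and taking $C = B$, the upper triangle gives $\star'_{B,B} \circ \star_{B,A} = \star_{B,A}$. Second, I would apply the compatibility diagram for $\star'$ to the same composable pair, now reading $f := \star_{B,A}$: the lower triangle then reads $\star'_{B,B} \circ \star_{B,A} = \star'_{B,A}$. Chaining these two equalities yields $\star_{B,A} = \star'_{B,A}$, and since $A,B$ are arbitrary, uniqueness follows.

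I do not anticipate any genuine obstacle, since the argument is purely formal and relies only on the commutativity of the compatibility square: the left/right constant properties in isolation are not even invoked beyond what the diagram already encodes. The only real subtlety is spotting the right \emph{bridge} morphism (namely $\star'_{B,B}$) to insert between the two families, so that one factor is supplied by each family and each half of the equality is licensed by the compatibility of the corresponding family. No additional hypotheses on the category (zero object, additivity, etc.) are needed.
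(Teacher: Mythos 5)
Your proof is correct and follows essentially the same route as the paper: form a mixed composite with one factor from each family and evaluate it two ways via the respective compatibility triangles of Diagram \ref{fig1.1}. The paper uses the general bridge $\star'_{C,B}\circ\star_{B,A}$ to conclude $\star_{C,A}=\star'_{C,A}$, while you specialize to $C=B$ with bridge $\star'_{B,B}$; this is the same argument up to relabelling.
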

\begin{proof}
Consider two different families of constant morphisms $\{\star\}$ and $\{\star'\}$. 
If $A \xrightarrow{\star_{B, A}} B \xrightarrow[\star'_{C, B}]{\star_{C, B}} C$, then from left constantness of ${\star'}$, we have $\star'_{C, B}  \star_{B, A}=\star'_{C, A}$ and from right constantness of ${\star}$, we have $\star'_{C, B}  \star_{B, A}=\star_{C, A}$ therefore, given Equation~\ref{eq:locally-pointed} $\star_{C, A}=\star'_{C, A}$ for every $(A, C)$.
\end{proof}
As you observed the constantness is a liberal structure, in the sense that it does not follow any specific categorical property such as universality. Now it is the time to define a similar definition that needs to follow universality condition. Later, we will demonstrate how and when these definitions coincide with each other. Let us switch our focus to objects, and define objects which are absorbent. 
\begin{defi}
An \textit{initial} object is an object $I$, that for all objects $A$, there exists a unique morphism $I \xrightarrow{i_{A}}A$. 
\end{defi}
Note that the uniqueness of the morphism specifies the universality condition. Co-definition of the initial object specifies the terminal object. 
\begin{defi}
An \textit{terminal} object is an object $T$, that for all objects $A$, there exists a unique morphism $A \xrightarrow{t_{T}}T$. 
\end{defi}
You can observe the reason we call these objects absorbent; first they absorb all morphisms and second when these two objects coincide we have a zero object. The morphisms that factorize through zero objects are constant. 
\begin{defi}
A \textit{zero object} is an object which is simultaneously initial and terminal. 
\end{defi}
Because the zero object is both initial and terminal, there exists a pair of unique morphisms for each object that starts and ends at the zero object.
\[
I \xrightarrow{i_A} A \xrightarrow{t_A} T
\]
If one composes these unique morphisms $t_Ai_A$ and composes the result with any other morphism the result is always the same morphism and constant due to the uniqueness of the morphism, $i_B$.  
$$
\begin{tikzpicture}
    \node (0) at (0, 0) {$I$};
    \node (1) at (1, 0) {$A$};
    \node (2) at (2, 0) {$T$}; 
    \node (3) at (3, 0) {$B$}; 

    \draw[->] (0) to node[above]{$i_A$}(1); 
    \draw[->] (1) to node[above]{$t_A$}(2); 
    \draw[->] (2) to node[above]{$t_B$}(3); 
    \draw[->, bend right=30] (0) to node[below]{$i_B$}(3); 
\end{tikzpicture}
$$
Now if the initial and terminal objects are isomorphic, the terminal and initial morphisms coincide and define zero morphisms. 
\begin{defi}
In a category with the zero object, a \textit{zero morphism} is a morphism which factorizes through the zero object. 
\end{defi}
\begin{rem}
In a category with the zero object, the family of zero morphisms is the only family of constant morphisms thanks to Proposition \ref{pro1}. 
\end{rem}
\begin{rem}
	A category might have constant morphisms but not a zero object.
\end{rem}
We are now in the right position to introduce a notion of addition between objects. We shall do so using the properties and structures laid out in the previous section. 
\subsubsection{Biproducts in categories}\label{sect:biproduct}
As mentioned earlier, a biproduct is simply a categorical addition between objects. One definition is based on products and coproducts in categories, and it satisfies a universal condition. Another definition is the algebraic definition with five equations which capture exactly the same addition albeit simpler and easier to check. In this section, we define both definitions and show how they coincide with each other. This part is rather necessary in order to make sure we have a good understanding of primary concepts and are able to define the 2-categorical correspondence.  

The algebraic definition needs a well-defined addition between morphisms, hence, an appropriate category to start with is a category whose all hom-sets are commutative monoids. We will show in the following lemma that the zero morphisms of monoids are constant morphisms which are compatible with each others and satisfies Condition~\ref{eq:locally-pointed}. 
\begin{lemma}
A category whose hom-sets are commutative monoid is a locally pointed category. 
\end{lemma}
\begin{proof}
Consider $A \xrightarrow[e_{B, A}]{f} B \xrightarrow{g} C$, for which $e$ is the unit element. From linearity of composition $g   (f + e_{B, A}) = g   f + g   e_{B, A}$ and since $e_{B, A}$ is the unit element of monoid $hom(A, B)$, so, $g   (f + e_{B, A}) = g   f$, and $g   e_{B, A}$ is the unit element of monoid $hom(A, C)$.    
\end{proof}
We are now ready to see the algebraic definition of biproducts.  Algebraic definitions, whenever they exist, are more desirable, since for instance: checking whether an algebraic definition is preserved by a particular functor, boils down to checking functionality. In terms of programming, it is also desirable since typed equations are more straightforward to follow than cone following and universal conditions for each pair of objects. 
\begin{defi}
In a category whose hom-sets are commutative monoids, a \textit{biproduct} of a pair of objects $(A, B)$ is a tuple $(A\oplus B, p_A, p_B, i_A, i_B)$ where $0$ are constant morphisms, such that:
\begin{align*}
& p_A  i_A=id_A, \hspace{0.3cm}p_B  i_B=id_B,\hspace{0.3cm}p_A  i_B=0_{A,B}, \hspace{0.3cm} p_B  i_A=0_{B,A},  \hspace{0.3cm}i_A  p_A + i_B  p_B=\id_{A\oplus B}
\end{align*}
\end{defi}
This definition is the primary definition for our objective of typing matrices; however, since we will define a corresponding biproducts in 2-categories. We need to see the categorical properties which help us to have a better grasp of biproducts in categories. This definition is based on products and coproducts. Products is similar to pairing of objects or types and coproducts is a sort of co-pairing. In short, a pair of objects in a category can have non-isomorphic products and coproducts, but if they are isomorphic, we say this pair of objects has a biproduct. 
\begin{defi}
In a category, a \textit{product} of pair of objects $(A, B)$ is a tuple $(A \times B, p_A, p_B)$ such that for an arbitrary cone $(X, f: X \longrightarrow A, g: X \longrightarrow B)$ shown in Figure \ref{fig:product2}, there exists a unique morphism $b: X \longrightarrow A \times B$ which satisfies $p_A b = f$ and $p_B b = g$. Morphisms $p_A$ and $p_B$ are called projections. 
\begin{figure}[ht!]
	\centering
	\begin{tikzpicture}
		\node [] (0) at (-2, -1) {$A$};
		\node [] (1) at (2, -1) {$B$};
		\node [] (2) at (0, 1) {$A \times B$};
		\node [] (3) at (0, 3) {$X$};
		
		\draw [->, red, dashed]  (3)  to [ bend right=40] node[left]{~$f$} (0);
		\draw [->] (2) to [bend right=20]  node[below right] {$p_A$} (0);
		\draw [->, red, dashed]  (3)  to [bend left=40] node[right]{~$g$}(1);
		\draw [->] (2) to [bend left=20] node[below left] {$p_B$}(1);
		\draw[red, ->, dashed] (3) to node[left] {$ b$}(2); 
	\end{tikzpicture}
	\caption{Product of $(A, B)$.}\label{fig:product2}
\end{figure}
\end{defi} 
\begin{defi}
In a category, a \textit{coproduct} of pair of objects $(A, B)$ is a tuple $(A \sqcup B, i_A, i_B)$ such that for an arbitrary cone $(X, f: A \longrightarrow X, g: B \longrightarrow X)$ there exists a unique morphism $b: A \sqcup B \longrightarrow X$ which satisfies $bi_A = f$ and $bi_B= g$. Morphisms $i_A$ and $i_B$ are called injections. (A cone similar to Figure~\ref{fig:product2} can be drawn although with reversed arrows.)
\end{defi} 
\begin{defi}\label{defi2.1}
In a locally pointed category, the \textit{canonical morphism} between a coproduct of a pair of objects $A_1, A_2$, i.e. $A_1 \sqcup A_2$ and a product $A_1 \times A_2$ is morphim $r$ such that it satisfies $p_k   r   i_j=\delta_{k, j}$.
\begin{equation}\label{equation:biproduct}
A_j \xrightarrow{i_j} A_1 \sqcup A_2 \xrightarrow{r} A_1 \times A_2 \xrightarrow{p_k} A_k, \hspace{1cm}\text{if } \hspace{0.2cm} i, j \in \{1, 2\}
\end{equation}
\end{defi}
\begin{defi}\label{defi1}
In a locally pointed category, a pair of objects $A$ and $B$ has a \textit{biproduct} if the canonical morphism $r$ is an isomorphism. 
\end{defi}
Now we can clearly demonstrate how these two definitions are equivalent, or how one can recover the algebraic definition from Equation \ref{equation:biproduct} assuming $r$ is an isomorphim. 
\begin{defi}
A \textit{semiadditive category} is a category with finite biproducts.
\end{defi}
It is worth noting that if a category has biproducts of the second form, i.e. equivalent products and coproducts; it imposes a monoid structure on hom-sets. That, in turn, will result in the algebraic definition of biproducts. 
\begin{proposition}\label{pro-26}
A semiadditive category, with the definition based on (co-)product is a category whose hom-sets are commutative monoids.  
\end{proposition}
\begin{proof}
Define a monoid addition in $\hom(A, B)$ as below:
$$
	f+g: A \xrightarrow{\bigtriangleup_A} A \oplus A \xrightarrow{f\oplus g} B\oplus B \xrightarrow{\bigtriangledown_B} B
$$
Such that $\bigtriangleup_A=\begin{bmatrix}
	id_A \\ id_A
\end{bmatrix}$  and $\bigtriangledown_B = \begin{bmatrix}
id_B & id_B
\end{bmatrix}$. 
Commutativity and associativity of $+$ follow from associativity and commutativity of biproduct $\oplus$:
\begin{align*}
& f+g = \bigtriangledown_B (f \oplus g) \bigtriangleup_A = \bigtriangledown_B (g \oplus f) \bigtriangleup_A =  g+f \\
& (f+g)+h = \bigtriangledown_B p_{12}((f \oplus g) \oplus h)i_{12} \bigtriangleup_A =   \bigtriangledown_B p_{12}(f \oplus (g \oplus h))i_{12} \bigtriangleup_A =  f+(g+h)
\end{align*}
\end{proof}
As expressed in~\cite{macedo2013typing, bird1996algebra}, one can interpret biproduct expressions as usual programming elements. That is $[f | g]$ is "f junc g" and $\begin{bmatrix}
k \\
-\\
l
\end{bmatrix}$ is "k split l". In the following, we will see a more explicit treatment of biproducts with matrices. 

Before describing the main example of this category, we need to define monoidal categories. Although in terms of category of matrices, this is simply multiplication of natural numbers, it will be necessary in the context of \textbf{currying} and vectorization. We will give the definition and a few examples; one can further check the structures and properties of this category in \cite{mac2013categories}. 
\begin{defi}[Monoidal Category]
A \textit{monoidal category} $\mathcal{C}$ is a category with a bifunctor, $\otimes: \mathcal{C} \times \mathcal{C} \longrightarrow \mathcal{C}$ such that for every three objects, there exists a (natural) isomorphism $a_{A, B, C}$ such that it satisfies the pentagonal and tiangle equations given in Page 23 of Joyal and Street's paper~\cite{joyal1993braided}. 
$$
a_{A, B, C}: (A \otimes B) \otimes C \longrightarrow A \otimes (B \otimes C)
$$ 
\end{defi}
In category of sets, \textbf{Set}, a straightforward choice of such a bifunctor is Cartesian product. In category of vector spaces, monoidal product is the tensor product of vector spaces. As we will see in the next section, in category of matrices, \mat, it is the multiplication of numbers. 
\begin{rem}
In a monoidal and semiadditive category, biproduct and monoidal products should behave coherently with respect to each other, namely, the monoidal product distributes over biproducts. 
$$
A \otimes (B \oplus C) \cong (A \otimes B) \oplus (A \otimes C)
$$
\end{rem}

\subsubsection{Matrices and biproducts}
Consider the category of matrices over a field, \mat. Objects of this category are natural numbers, and morphisms are matrices whose entries belong to the field. For example, \textbf{hom}(2, 3) consists of all 3 by 2 matrices over Field $\Bbbk$. The composition of morphisms is matrix multiplication. 
\begin{align*}
    & 2 \xrightarrow{M_{3\times 2} \in hom(2, 3)} 3, & 2 \xrightarrow{M_{3\times 2} \in hom(2, 3)} 3 \xrightarrow{N_{3\times 5} \in hom(3, 5)} 5
\end{align*}
One can immediately observe the advantage of using categorical composition instead of usual matrix multiplication; point-free typed matrices allow us to avoid checking the dimensions of a matrix for matrix multiplication. In most  software such as MATLAB or programming languages such as python, the programmer needs to take care of shape and dimensions of matrices while defining them; otherwise, it raises an error and does not calculate the multiplication. This method of defining the matrices, automatically will consider the types of matrices. 

Matrix transposition is another linear algebra operation which is cumbersome. As one needs to swap all rows and columns, but in the categorical form, we only reverse the arrow; $M^T: 3 \longrightarrow 2$. Later we also see how we can write it down with only using \textbf{vec} and \textbf{unvec} maps. 

Since hom-sets with matrix addition form abelian groups, this category is furthermore abelian, which incidentally implies semiadditivity. To see the role of biproducts in matrix calculus, take the example below \ref{fig:mat-biproduct} in which $b = \begin{bmatrix}
	f \\
	\hline 
	g
\end{bmatrix}$ and $ b'= \begin{bmatrix}
	h &
	\vline 
	& k
\end{bmatrix}$.
\begin{figure}\label{fig:mat-biproduct}
\centering
	\begin{tikzpicture}
		\node (0) at (-2.5, 0){3};
		\node (1) at (0, 0){3+2};
		\node (2) at (2.5, 0){2};
		\node (3) at (0, 2.5){1};
		
		\draw[->] (1) to node[above]{$p_1$} (0);
		\draw[->] (1) to node[above]{$p_2$} (2);
		\draw[->] (3) to node[right]{$b$} (1);
		\draw[->, bend left=40, blue] (1) to node[left]{$b'$} (3);
		\draw[->] (3) to node[above left]{$f$}  (0);
		\draw[->] (3) to node[above right]{$g$} (2);
		\draw[->, bend left=40, blue] (0) to node[left]{$h$}  (3);
		\draw[->, bend right=40, blue] (2) to node[right]{$k$} (3);
		\draw[->, bend left = 30, blue] (2) to node[below]{$i_2$}(1);
		\draw[->, bend right = 30, blue] (0) to node[below]{$i_1$}(1);
	\end{tikzpicture}
    \caption{Matrices and biproducts.}
\end{figure}
Projections are $p_1 = \begin{bmatrix}
	1 & 0
\end{bmatrix}$ and $p_2 = \begin{bmatrix}
	0 & 1
\end{bmatrix}$ and injections $i_1 = \begin{bmatrix}
	1 \\ 0
\end{bmatrix}$ and $i_2 = \begin{bmatrix}
	0 \\ 1
\end{bmatrix}$. We have:
\begin{align*}
	& p_1 \begin{bmatrix}
		f \\
		\hline 
		g
	\end{bmatrix} = \begin{bmatrix}
		1 & 0
	\end{bmatrix} \begin{bmatrix}
		f \\
		\hline 
		g
	\end{bmatrix} = f, && \begin{bmatrix}
		h &
		\vline 
		& k
	\end{bmatrix} i_1  = \begin{bmatrix}
		h &
		\vline 
		& k
	\end{bmatrix} \begin{bmatrix}
		1 \\ 0 
	\end{bmatrix} = h  \\
	& p_2 \begin{bmatrix}
		f \\
		\hline 
		g
	\end{bmatrix} = \begin{bmatrix}
		0 & 1
	\end{bmatrix} \begin{bmatrix}
		f \\
		\hline 
		g
	\end{bmatrix} = g &&\begin{bmatrix}
		h &
		\vline 
		& k
	\end{bmatrix} i_2  = \begin{bmatrix}
		h &
		\vline 
		& k
	\end{bmatrix} \begin{bmatrix}
		0 \\ 1 
	\end{bmatrix} = k
\end{align*}
We can verify the conditions of the biproduct with the explicit matrix representation of projections and injections. For example,  
\begin{align*}
& i_1 p_2 = 0, p_1i_1 + p_2i_2 = \begin{bmatrix} 1 & 0 \\ 0 & 1 \end{bmatrix}, p_1i_1 =\begin{bmatrix} 1 & 0 \\ 0 & 0 \end{bmatrix} 
\end{align*}
This partitioning of matrices via biproducts is utilized by \cite{macedo2013typing} to implement divide-and-conquer and Gaussian elimination algorithms. We don't repeat their contribution but emphasis, this will be carried out in categorical and 2-categorical levels. When we further define biproducts in 2-categories, all categorical constructions will be automatically carried out in Hom-categories. In terms of 2-categorical construct, one can implement these results locally in each Hom-category. 

As it should be clear from the explicit matrix form of projections and injections above, projections and injections play the role of basis elements which project/inject the entries of morphisms to/from biproducts. For instance, assume the following list of morphisms $$f: A_1 \longrightarrow A'_1,  g: A_1 \longrightarrow A'_2,  h: A_1 \longrightarrow A'_3$$ $$f': A_2 \longrightarrow A'_1, g': A_2 \longrightarrow A'_2, h': A_2 \longrightarrow A'_3 $$ 
It is not difficult to show that $A_1 \oplus A_2 \xrightarrow{t} A'_1 \oplus A'_2 \oplus A'_3$ is $t =\begin{bmatrix}
f & f'\\
g & g' \\
h & h'
\end{bmatrix}$, when projections are $p_1 = \begin{bmatrix}
1 & 0 & 0
\end{bmatrix} $, $p_2 = \begin{bmatrix}
0 & 1 & 0
\end{bmatrix} $, $p_3 = \begin{bmatrix}
0 & 0 & 1
\end{bmatrix} $ and injections are $i_1 =  \begin{bmatrix}
	1 \\
	0
\end{bmatrix} $, $i_1 =  \begin{bmatrix}
	0 \\
	1
\end{bmatrix}$. 

Category of matrices, \mat, provide an intuition for us to guess the correct order/rank of matrices when working with general semiadditive categories. Henceforth, we represent morphisms in semiadditive categories in matrix notation when it eases our calculations.  
\subsection{Typed algorithms and vectorization}\label{section:vectorization1}
The authors of \cite{macedo2013typing} summarize the steps of matrix calculus algorithms such as divide-and-conquer and Gaussian elimination using semiadditive category properties. This, however, follows from three basic procedures; 1.properties of semiadditive categories, 2.different variants of biproducts, and 3.vectorization. One variant of biproduct, for instance, makes the elimination of rows possible and adds a minus sign for row and column deduction. Without repeating the results of the paper, we describe divide-and-conquer algorithm as an instance. For other algorithms, check their paper~\cite{macedo2013typing}.

The goal is to multiply these two block matrices as follows:
\begin{align*}
    & \begin{bmatrix}
        f | g
    \end{bmatrix}. \begin{bmatrix}
        h \\ - \\ k
    \end{bmatrix} = f.h \oplus g.k
\end{align*} 
We first use the definition of biproduct to write the second blocked column matrix:
\begin{align*}
    & \begin{bmatrix}
        f | g
    \end{bmatrix} . \begin{bmatrix}
        h \\ - \\ k
    \end{bmatrix} = 
    \begin{bmatrix}
        f | g
    \end{bmatrix} . (i_1.h \oplus i_2.k)
\end{align*}
We then use bilinearity of semiadditive categories and injection of the first and second block :
\begin{align*}
    & \begin{bmatrix}
        f | g        
        \end{bmatrix}.i_1.h \oplus \begin{bmatrix}
        f | g        
        \end{bmatrix}.i_2.k =f.h \oplus g.k
\end{align*}
Note that we did not use matrix entries or the usual product (rows times columns) here. We used the fact that these are typed morphisms plus properties of injections and projections. 

Another subroutine is vectorization which converts a matrix to a column vector. This step is a familiar step of some deep learning algorithms; for instance, in convolutional neural networks, vectorization refers to the process that transforms the original data structure into a vector representation, See~\cite{ren2015vectorization}. Although having a considerable data-size and powerful GPU are primary reason behind the success of these networks, vectorization deserves some credit as it makes the parallel computation possible. 

If one uses a functional programming language combined with a CNN algorithm, one can vectorize matrices in typed level. Apart from this familiar example, various reasons have been suggested in the literature in different contexts for vectorization; from hardware structure and process point of view, data structure and parsing, efficient and speedy programming without for-loops, machine learning, text processing and large language models. We summarized a few reasons such as using the architecture of modern CPU's and cash-friendliness in the introduction, and for further read, See~\cite{lam1991cache}. Our concern here is not about applications. However, given that the backbone of most of these areas is linear algebra the introduced categorical notations are straightforwardly extendable to all contexts.

In categorical language, there is a specific categorical procedure for vectorization; namely \textbf{currying}.  Here, we only need to understand the operational meaning of currying for vectorization. To review the details of currying, and its implications for formalized mathematics, for an approach suitable for programmers, consult \cite{milewski2018category} and a more abstract view in terms of formalized mathematics and Homotopy type theory check \cite{program2013homotopy}.

Let us consider an example, given a $2\times 2$ matrix $A$, with vectorization we can write it as a $4 \times 1$ column vector, $\textbf{vec}f$. 
\begin{align*}
    & f = \begin{bmatrix}
        f_{11} & f_{12} \\ f_{21} & f_{22}
    \end{bmatrix}, & \textbf{vec} f = \begin{bmatrix}
        f_{11} \\
        f_{21}\\
        f_{12}\\
        f_{22}
    \end{bmatrix}
\end{align*}
It is easy to define \textbf{vec} here as:
$$
\textbf{vec}:: (2 \leftarrow{f} 2 \times 1) \leftarrow{} (2 \times 2 \xleftarrow{\textbf{vec}f} 1) 
$$
The general form of the operation, if considered in the category of matrices, \mat is as follows:
\begin{equation}\label{eq:vect-unvect}
\textbf{vec}_K::(N \xleftarrow{f} K \times M) \rightarrow{} (K \times N \xleftarrow{\textbf{vec}f} M)  
\end{equation}
$k$ is called the ``thinning factor'' as it determines the size of vectorization. For example, in a CNN, you compress a block matrix to a $1 \times 1$ matrix, then you vectorize and you can even further compress that vector. 
\begin{figure}[ht!]
    \centering
    \includegraphics[scale=0.5]{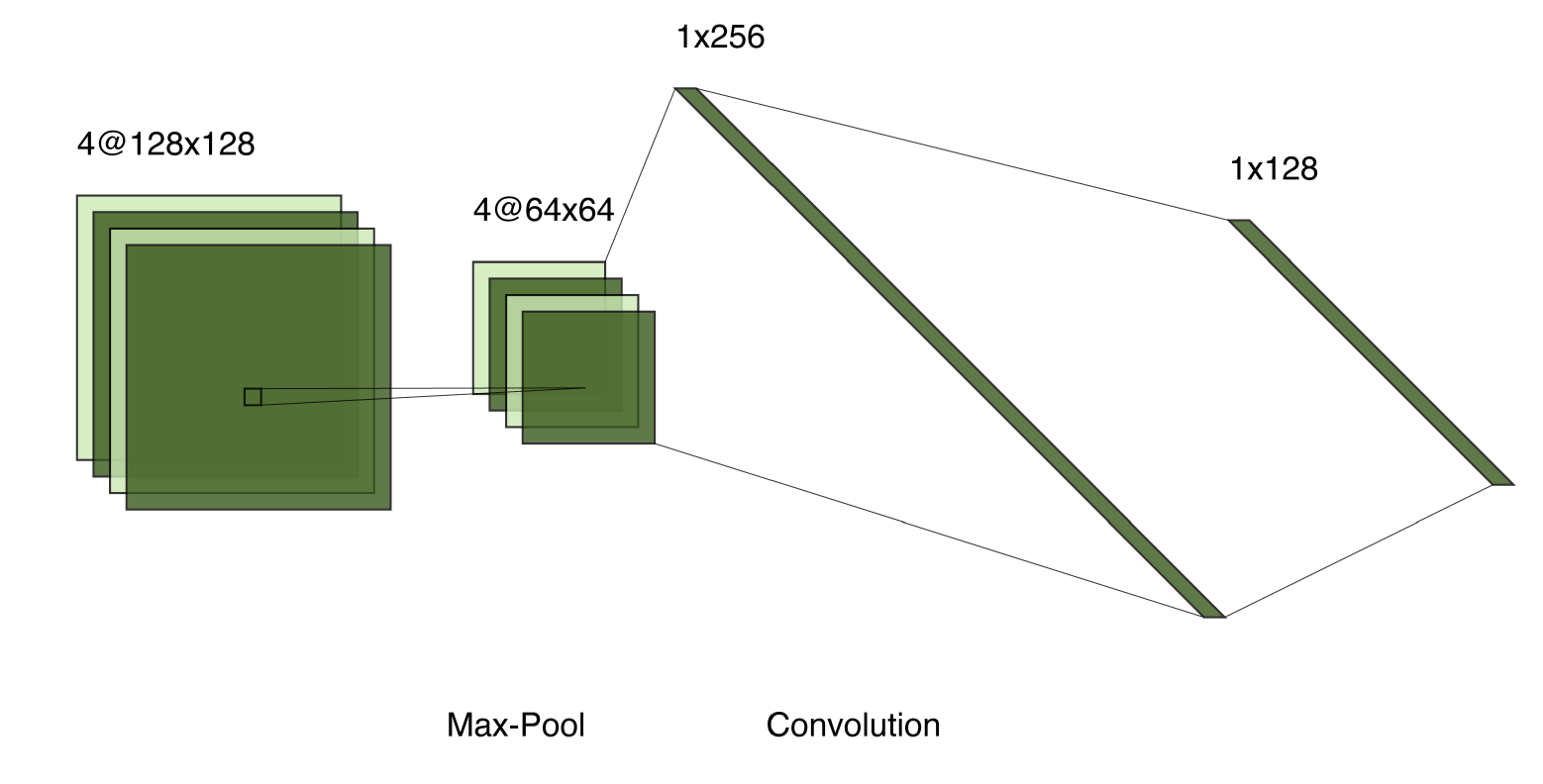}
    \caption{Vectorization with different thinning factors in layers of a convolutional neural network. }
\end{figure}

As it should be apparent from the above equation~\ref{eq:vect-unvect}, one can equally define an inverse of the transformation, \textbf{vect}, called, \textbf{unvec}.  But it is more than reversing an arrow, seeing this operation as \textbf{currying} it satisfies the universal property coming for free with categorical definition. In categorical setting, it is an isomorphisms between hom-set which assigns to a morphism $f: A \otimes B \longrightarrow C$ a morphism $g: A \longrightarrow C^B$. $C^B$ is called an exponential object, it is basically an object plus a morphism attached to the object such that it satisfies a universal property. We do not delve into the general definition, however, let us see what it means in terms of category of matrices, \mat, and transformations \textbf{vec} and \textbf{unvec}. If you check the Diagram~\ref{diagram:vect-unvect}, you can see that de-vectorization is carried out by morphism $e_k$, which assign to Morphism, $v$, Morphism, $f$. Note that $v$ is already a vector of $f$. The diagram, furthermore, shows the universal property of \textbf{currying}.
\begin{equation}\label{eq:curry}
    v = \textbf{vec}_K f \Leftrightarrow  f = e_K (id_K \otimes f) 
\end{equation}
\begin{equation}\label{diagram:vect-unvect}
 \begin{tikzcd} 
K \times N &&& K \times (K \times N) \arrow[r, "e_K"] &N\\
M \arrow[u, "v"] &&& K \times M \arrow[u, "id_K \otimes v"] \arrow[ru, "f"] &
\end{tikzcd}    
\end{equation}
Having the vectorization procedure, one can define matrix operations and algorithms completely in a vectorized fashion. Let us review transpose operation given by \cite{macedo2013typing}. Having a matrix $N \xleftarrow{f} M$, we start by first de-vectorization, $1 \xleftarrow{g=\textbf{vec}f} N \times M$, then vectorization, $N \times M \xleftarrow{h=\textbf{vec} g} 1$ and finally, $M \xleftarrow{\textbf{unvec} h} N$. Hence, 
$$
f^T = \textbf{vec}(\textbf{vec} (\textbf{unvec} f)))
$$
We close the review part and the categorical level of typing linear algebra here, and in the next sections, we look at the similar 2-categorical properties and structures.   
\section{Typing tensors in 2-categories: 2-categorical case}\label{sec:2-dim}
In this section, we extend the constructions from the categorical case to define 2-categorical analogues, aiming to establish 2-biproducts for 2-categories. To capture the tensor product between tensors, a 2-category needs to have a monoidal structure, and to define vectorization and a version of currying map, the 2-category furthermore has to be symmetric monoidal. Hence, we will establish a suitable framework is a symmetric monoidal semiadditive 2-category whose Hom-categories are symmetric monoidal and semiadditive.   We will proceed with an example that satisfies the conditions of our definition, namely $\mathbf{2Vec}$. Finally, we will conclude with a discussion on vectorization for 1- and 2-morphisms. As far as we are aware, all notions (from categorical perspective and applications) presented in this section are novel and have not been explored in the literature.
\subsection{Semiadditive 2-categories}
Intuitively, a semiadditive 2-category should possess additions for objects, 1-morphisms, and 2-morphisms. Analogous to semiadditive categories, we begin with a 2-category that includes an addition for 1-morphisms. However, in the 2-categorical setting, defining such an addition introduces intricacies, as multiple options arise for combining 1-morphisms. Our goal is to recover semiadditive categories, the most natural choice is the biproduct structure for 1-morphisms, as defined in the previous section~\ref{sect:biproduct}.

Therefore, we define a semiadditive 2-category as a 2-category whose Hom-categories have biproducts for 1-morphisms, in accordance with the definition provided in Section~\ref{sect:biproduct}. This section focuses on developing these structures. Following Section~\ref{1-d}, we begin by introducing constant morphisms. The first difference with categorical setting emerges here. Due to the presence of 2-morphisms, we also establish a notion of constantness for both 1- and 2-morphisms.

\subsubsection{Constant morphisms in 2-categories}
In principle, a 2-category can have constant 1- or 2-morphisms; we start from the top level, i.e. 2-morphisms. We define constant 2-morphisms independently for horizontal and vertical compositions. Because it is possible for Hom-categories to be locally pointed with respect to the vertical composition of 2-morphisms. It is also conceivable to have a 2-category with horizontally constant 2-morphisms with respect to horizontal composition. These two sets of constant 2-morphisms are generally independent from each other. However, we demonsterate if a 2-category happens to have two different families of horizontally and vertically constant 2-morphisms, then they collapse, due to the interchange law. Meaning, in this case, we only have one family of constant 2-morphisms. Note that the interchange law exists in 2-category theory and enforces a consistency condition between horizontal and vertical compositions, check Adámek and Rosický's review~\cite{adamek1999interchange}. 
\begin{defi}
	A 2-morphism $*_{h, k}: k\Rightarrow h$ is \textit{left constant} if for every object $A$, every pair of 1-morphisms $f, g$, and every pair of 2-morphisms, $\xi, \gamma: f \Rightarrow  g$, horizontal composition on the right is the same, $*_{h, k} \xi = *_{h, k}  \gamma$. 
	\begin{center}
		\begin{tikzpicture}
			\node (1) at (0, 0) []{$A$};
			\node (2) at (2, 0) []{$B$};
			\node (3) at (4, 0) []{$C$};
			\node (4) at (1, 0.5) []{};
			\node (5) at (1, -0.5) []{};
			\node (6) at (3, 0.5) []{};
			\node (7) at (3, -0.5) []{};
			
			\draw[->, bend left=45] (1) to node[above]{$f$}(2);
			\draw[->, bend right=45] (1) to node[below]{$g$}(2);
			\draw[->, bend left=45] (2) to node[above]{$k$}(3);
			\draw[->, bend right=45] (2) to node[below]{$h$}(3);
			\draw[->, double, thin, bend left=30] (4) to node[right]{$\xi$}(5);
			\draw[->, double, thin, bend right=30] (4) to node[left]{$\gamma$}(5);
			\draw[->, double] (6) to node[right]{$*_{h, k}$}(7);
		\end{tikzpicture}
		\label{hor-1}
	\end{center}  
\end{defi}
\begin{defi}
	A 2-morphism $*_{h, k}: k\Rightarrow h$ is \textit{right constant} if for every object $D$, every pair of 1-morphisms $l, m$, and every pair of 2-morphisms, $\alpha, \beta: l \Rightarrow  m$, horizontal composition on the left is the same, $\beta  *_{h, k} = \alpha  *_{h, k}$.
	\begin{center}
		\begin{tikzpicture}
			\node (B) at (2, 0) []{$B$};
			\node (C) at (4, 0) []{$C$};
			\node (D) at (6, 0) []{$D$};
			\node (1) at (1, -0.5) []{};
			\node (2) at (1, 0.5) []{};
			\node (3) at (3, -0.5) []{};
			\node (4) at (3, 0.5) []{};
			\node (5) at (5, -0.5) []{};
			\node (6) at (5, 0.5) []{};

			\draw[->, bend left=45] (B) to node [above]{$k$} (C);
			\draw[->, bend right=45] (B) to node [below]{$h$} (C);
			\draw[->, bend left=45] (C) to node [above]{$l$} (D);
			\draw[->, bend right=45] (C) to node [below]{$m$} (D);
			\draw[->, double] (4) to node [right]{$*_{h, k}$} (3);
			\draw[double,thin,->, bend right=45] (6) to node [left]{$\alpha$} (5);
			\draw[double, ->, bend left=45] (6) to node [right]{$\beta$} (5);
		\end{tikzpicture}
		\label{dis-2}
		
	\end{center}
\end{defi}
\begin{defi}
In a 2-category, a \textit{horizontally constant 2-morphism} is a 2-morphism which is horizontally left and right constant.
\end{defi}
We will define similar notions for vertical compositions. As you might expect, the constantness should be with respect to the vertical compositions of 2-morphisms, i.e. up and down.
\begin{defi}
	A 2-morphism $*_{k, g}: k\Rightarrow g$ in a 2-category is \textit{down constant} if for every 1-morphism $f$, and every pair of 2-morphisms $\beta, \alpha: f \Rightarrow g$, vertical composition from above is the same, $*_{k, g} \odot ~\beta = *_{k, g} \odot ~\alpha $.
\end{defi}

\begin{center}
	\begin{tikzpicture}
		\node (0) at (0, 0) []{$A$} ;
		\node (1) at (3, 0) []{$B$};
		\node (2) at (1.5, 1) []{};
		\node (3) at (1.5, 0.1) []{};
		\node (4) at (1.5, -0.1) []{};
		\node (5) at (1.5, -1.1) []{};
		\node (6) at (2.6, -0.3)[]{$g$};

		\draw[->, bend left=70] (0) to node[above]{$f$}(1);
		\draw[->] (0) to node[below]{}(1);
		\draw[->, bend right=70] (0) to node[below]{$k$} (1);
		\draw[->, double, thin, bend right=40] (2) to node[left]{$\beta$}(3);
		\draw[->, double, thin, bend left=40] (2) to node[right]{$\alpha$}(3);
		\draw[->, double, thin] (4) to node[right]{$*_{k, g}$}(5);
		
	\end{tikzpicture}
	\label{ver1}
\end{center}
\begin{defi}
A 2-morphism $*_{k, g}: k\Rightarrow g$ in a 2-category is \textit{up constant} if for every 1-morphism $f$, and every pair of 2-morphisms, $\xi, \gamma: k \Rightarrow h$, vertical composition from below is the same, $\xi \odot *_{k, g}  = \gamma \odot *_{k, g} $.
\end{defi}
\begin{center}
	
	\begin{tikzpicture}
		\node (0) at (0, 0) []{$A$} ;
		\node (1) at (3, 0) []{$B$};
		\node (2) at (1.5, 1) []{};
		\node (3) at (1.5, 0.1) []{};
		\node (4) at (1.5, -0.1) []{};
		\node (5) at (1.5, -1) []{};
		\node (6) at (2.4, -0.2)[]{$k$};

		\draw[->, bend left=70] (0) to node[above]{$g$}(1);
		\draw[->] (0) to node[below]{}(1);
		\draw[->, bend right=70] (0) to node[below]{$h$} (1);
		\draw[->, double, thin, bend right=40] (4) to node[left]{$\xi$}(5);
		\draw[->, double, thin, bend left=40] (4) to node[right]{$\gamma$}(5);
		\draw[->, double, thin] (2) to node[right]{$*_{k, g}$}(3);
		
	\end{tikzpicture}
	\label{ver-2}
\end{center}
\begin{defi}
	A \textit{vertically constant 2-morphism} is a 2-morphism which is up and down constant.
\end{defi}
As explained for the case of categories, once dealing with constant morphisms in a category with the zero object, one can consider zero morphisms (or morphisms factorizing through the zero object) as constant morphisms.  However in 2-categories, we have two sets of such constant morphisms. It is also not  trivial if zero morphisms are constant morphisms. We now present the proposition that first demonstrates the consistency of vertical and horizontal compositions and secondly guarantees zero 2-morphisms in Hom-categories are not only vertically but also horizontally constant. 
\begin{proposition}\label{pro3}
	If a 2-category has a family of horizontally constant 2-morphisms $\{*\}$ and a family of vertically constant 2-morphisms $\{*'\}$, then they are equal. 
\end{proposition}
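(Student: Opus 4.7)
The plan is to imitate the proof of Proposition \ref{pro1}: produce a single 2-morphism that the horizontally constant family $\{*\}$ forces to equal $*_{h,k}$ and the vertically constant family $\{*'\}$ forces to equal $*'_{h,k}$, then conclude that the two agree. Fix parallel 1-morphisms $k,h\colon A\to B$; the bridging expression I propose is the horizontal composite $*'_{h,k}\, *_{id_A,id_A}$, where $*_{id_A,id_A}\colon id_A\Rightarrow id_A$ is the horizontally constant ``scalar'' sitting in the hom-category at $id_A$.

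The first leg uses only horizontal constancy. Left constancy of $*_{h,k}$ applied to the parallel pair $(*_{id_A,id_A},\,1_{id_A})$ of 2-morphisms $id_A\Rightarrow id_A$ yields $*_{h,k}\,*_{id_A,id_A}=*_{h,k}\,1_{id_A}=*_{h,k}$ by strictness. Then right constancy of $*_{id_A,id_A}$ applied to the parallel pair $(*_{h,k},\,*'_{h,k})$ of 2-morphisms $k\Rightarrow h$ gives $*'_{h,k}\,*_{id_A,id_A}=*_{h,k}\,*_{id_A,id_A}=*_{h,k}$.

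The second leg turns the same expression into a vertical composite that vertical constancy can absorb. Writing $*'_{h,k}=*'_{h,k}\odot 1_k$ and $*_{id_A,id_A}=1_{id_A}\odot *_{id_A,id_A}$, the interchange law delivers
$$*'_{h,k}\,*_{id_A,id_A} \;=\; (*'_{h,k}\,1_{id_A})\odot(1_k\,*_{id_A,id_A}) \;=\; *'_{h,k}\odot(1_k\,*_{id_A,id_A}).$$
The whiskered 2-morphism $1_k\, *_{id_A,id_A}$ has type $k\Rightarrow k$ and is therefore parallel to $1_k$, so down constancy of $*'_{h,k}$ collapses the expression to $*'_{h,k}\odot 1_k=*'_{h,k}$. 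Chaining the two legs gives $*_{h,k}=*'_{h,k}\,*_{id_A,id_A}=*'_{h,k}$.

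The main obstacle I anticipate is conceptual rather than computational: spotting the correct bridging element. Horizontal and vertical constancy are a priori wholly independent conditions, and only the interchange law couples them. The insight is that the inert identity $1_{id_A}$ can, via left constancy, be exchanged for its horizontally constant sibling $*_{id_A,id_A}$, producing an expression that interchange converts into a vertical composite with a whiskered $k\Rightarrow k$ factor, which the vertically constant $*'_{h,k}$ then absorbs. Once $*_{id_A,id_A}$ is identified as this common handle, the rest is routine bookkeeping via the pasting lemma.
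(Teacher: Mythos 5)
Your proof is correct and follows essentially the same strategy as the paper's: the interchange law, applied to a configuration in which one horizontal factor lives over an identity 1-morphism, is what couples the two a priori independent constancy conditions. The only cosmetic differences are that you whisker with $id_A$ on the source side where the paper uses $id_B$ on the target side, and your two-way evaluation of the bridging composite $*'_{h,k}\,*_{id_A,id_A}$ leans only on the raw left/right/down constancy conditions and strictness, whereas the paper additionally invokes the compatibility of each family under composition.
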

\begin{proof} Given 1- and 2-morphisms shown in the following figure,
	\begin{center}
		\begin{tikzpicture}[scale=0.9]
			\node (1) at (0, 0) []{$A$};
			\node (2) at (3, 0) []{$B$};
			\node (3) at (6, 0) []{$C$};
			\node (4) at (0.4, 0.2)[]{$g$};
			\node (5) at (1.3, 0.9) []{};
			\node (6) at (1.3, 0.1)[]{};
			\node (7) at (1.3, -0.1)[]{};
			\node (8) at (1.3, -0.9)[]{};
			\node (9) at (4.3, 0.9) []{};
			\node (10) at (4.3, 0.1)[]{};
			\node (11) at (4.3, -0.1)[]{};
			\node (12) at (4.3, -0.9)[]{};
			\node (13) at (3.6, 0.25)[]{$j$};
			
			\draw[->, bend left=70] (1) to node[above]{$f$} (2);
			\draw[->] (1) to node[]{}(2);
			\draw[->, bend right=70] (1) to node[below]{$h$}(2);
			\draw[->, bend left=70] (2) to node[above]{$k$}(3);
			\draw[->] (2) to node[]{}(3);
			\draw[->, bend right=70] (2) to node[below]{$l$}(3);
			\draw[->, double, thin] (5) to node[right]{$*'_{g, f}$}(6);
			\draw[->, double, thin] (7) to node[right]{$\alpha$}(8);
			\draw[->, double, thin] (9) to node[right]{$\beta$}(10);
			\draw[->, double, thin] (11) to node[right]{$*_{l, j}$}(12);
			
		\end{tikzpicture}
		\label{proof1}
	\end{center}
	Due to the interchange law in 2-categories, we expect to have:
	\begin{equation} \label{eq:eq1}
		(*_{l, j} \odot \beta ) ~ (\alpha \odot {*'}_{g, f})=({*}_{l, j}  ~\alpha) \odot (\beta ~ {*'}_{g, f})
	\end{equation} 
Because $*^\prime$ is vertically constant. LHS:
	$(*_{l, j} \odot \beta)  *^{\prime}_{h, f}$. 
    
Because $*$ is horizontally constant. RHS: $*_{lh, jg} \odot (\beta  *^{\prime}_{g, f})$. 
	
Now in LHS and RHS, let $B=C$ and $k=j=l=id_B$. Therefore, we have:
	\begin{align*}
		LHS: & (*_{id_B, id_B} \odot 1_{id_B})  *^{\prime}_{h, f}=*_{h, f} & & 
		RHS:  *_{h, g} \odot (1_{id_B}  *^{\prime}_{g, f})=*^{\prime}_{h, f}
	\end{align*}
	Thus, for every pair of 1-morphisms $(h, f)$, horizontally and vertically constant 2-morphisms are equal. 
\end{proof}
\begin{rem} \textbf{(Important)}
Generally, the 2-identity $1_f$ for vertical composition is not also a horizontal 2-identity, unless $f$ is a 1-identity. That is, $1^{horizontal}_{id_B} = 1^{vertical}_{id_B}$. Because consider composable $\alpha: g \Rightarrow h$ and $1_f$, then $\alpha$ and $\alpha f: gf \Rightarrow hf$ do not have even the same source and target, Check Section B.1.1. Definition (2-category), Page 421 of Riel and Verity's book~\cite{riehl2022elements}. We use this fact throughout our paper. 
\end{rem}  
Due to the presence of 2-morphisms in 2-categories, constant 1-morphisms have a richer structure as they can be weakened to be constant up to a 2-morphism. We define the weak version here, the strict case is obtained when weakening 2-morphisms are identities. 
\begin{defi}
A 1-morphism $f: X\longrightarrow Y$ is \textit{weakly left constant} if for every object $W$ and for every pair of 1-morphisms, $h, g: W \longrightarrow X$, there exists a 2-isomorphism such that $\eta: f  g \Rightarrow f  h$. 
\end{defi}
\begin{defi}
A 1-morphism $f: X\longrightarrow Y$ is \textit{weakly right constant} if for every object $Y$ and for every pair of 1-morphisms, $h, g: Y \longrightarrow Z$, there exists a 2-isomorphism such that $\eta': g f \Rightarrow h f$. 
\end{defi}
\begin{defi}
A 1-morphism is \textit{weakly constant} if it is both weakly left and right constant. 
\end{defi}
\begin{defi}
A \textit{2-category with constant 1-morphisms} is a 2-category for which Diagram~\ref{fig1.3} commutes up to a 2-isomorphism. 
	\begin{figure}[ht!]
    \centering
		\begin{tikzpicture}
			\node (0) at (0, 0) []{$A$};
			\node (1) at (2, 0) []{$B$};
			\node (2) at (0, -2) []{$B$};
			\node (3) at (2, -2) []{$C$};
			\node[rotate = 45] (4) at (0.7, -1.5){$\Rightarrow$};
			\node[rotate=-135] (6) at (1.5, -0.7)[]{$\Rightarrow$};
			\node (8) at (0.8, -0.3)[]{$\star_{C, A}$}; 
			
			\draw[->] (0) to node[above]{$\star_{B, A}$}(1);
			\draw[->] (1) to node[right]{$g$}(3);
			\draw[->] (0) to node[left]{$f$}(2);
			\draw[->] (2) to node[below]{$\star_{C, B}$}(3);
			\draw[->] (0) to node[right]{}(3);
		\end{tikzpicture} 
        \caption{Condition of weakly constant 1-morphisms in 2-categories. }\label{fig1.3}
	\end{figure}
\end{defi}
A proposition similar to Proposition~\ref{pro1} for categories considers the uniqueness of constant 1-morphisms up to 2-isomorphisms. Because the condition above Figure~\ref{fig1.3} enforces such.  
\begin{proposition}\label{pro2}
In a 2-category with constant 1-morphisms, the family of constant 1-morphisms is unique up to a 2-isomorphism. 
\end{proposition}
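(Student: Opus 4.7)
My plan is to mirror the argument of Proposition \ref{pro1}, replacing each equality by a 2-isomorphism drawn from either the weak constancy axioms or the defining 2-isomorphisms in Diagram \ref{fig1.3}. Given two families $\{\star\}$ and $\{\star'\}$ of constant 1-morphisms and an arbitrary pair of objects $(A, C)$, I would fix an auxiliary object $B$ and consider the composite 1-morphism $\star'_{C, B} \star_{B, A}: A \longrightarrow C$, which I will then connect to both $\star_{C, A}$ and $\star'_{C, A}$ by chains of 2-isomorphisms.

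The first chain uses the family $\{\star'\}$. Weak left constantness of $\star'_{C, B}$, applied to the pair $\star_{B, A}, \star'_{B, A}: A \longrightarrow B$, supplies a 2-iso $\star'_{C, B} \star_{B, A} \Rightarrow \star'_{C, B} \star'_{B, A}$. The commutativity-up-to-2-iso of Diagram \ref{fig1.3} for the family $\{\star'\}$, read with $f = \star'_{B, A}$ and $g = \star'_{C, B}$, then provides a 2-iso $\star'_{C, B} \star'_{B, A} \Rightarrow \star'_{C, A}$. Vertically composing these two yields a 2-iso $\star'_{C, B} \star_{B, A} \Rightarrow \star'_{C, A}$.

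The second chain is symmetric and uses $\{\star\}$. Weak right constantness of $\star_{B, A}$, applied to the pair $\star_{C, B}, \star'_{C, B}: B \longrightarrow C$, gives a 2-iso $\star'_{C, B} \star_{B, A} \Rightarrow \star_{C, B} \star_{B, A}$; Diagram \ref{fig1.3} for $\{\star\}$ then supplies $\star_{C, B} \star_{B, A} \Rightarrow \star_{C, A}$. Vertical composition produces a 2-iso $\star'_{C, B} \star_{B, A} \Rightarrow \star_{C, A}$. Inverting this second chain and pasting with the first yields the desired 2-iso $\star_{C, A} \Rightarrow \star'_{C, A}$.

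The main conceptual point, as opposed to the 1-categorical case, is that weak constancy only furnishes \emph{existence} of 2-isomorphisms, so the single composite $\star'_{C, B} \star_{B, A}$ no longer equalizes both families on the nose; one instead glues two independent weak-constancy witnesses through this common composite. Consequently the resulting 2-iso $\star_{C, A} \Rightarrow \star'_{C, A}$ need not be canonical: it depends on the choice of the intermediate object $B$ and on the chosen 2-iso representatives. This is fine for the proposition as stated, which only claims uniqueness \emph{up to} a 2-isomorphism, but it flags a natural follow-up question --- whether imposing coherence conditions on the 2-isos in Diagram \ref{fig1.3} can promote this to a canonical 2-isomorphism --- that the statement deliberately leaves open.
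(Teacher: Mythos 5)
Your proof is correct and follows essentially the same route as the paper: both arguments mirror Proposition \ref{pro1} by joining $\star_{C,A}$ and $\star'_{C,A}$ through a common mixed composite of the two families and vertically composing the resulting weakening 2-isomorphisms. The paper is slightly more economical --- it applies the Diagram \ref{fig1.3} 2-isomorphisms of each family directly to the single composite $\star_{C,B}\,\star'_{B,A}$, letting the ``foreign'' morphism play the role of the arbitrary $f$ or $g$, so the intermediate weak-constancy step in each of your chains is not needed --- but the content is the same.
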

\begin{proof}:
    Consider two different families of constant 1-morphisms with their characteristic weakening 2-isomorphisms, $\{\star, \eta\}$ and $\{\star', \beta\}$. Given $\eta^{-1}: \star \Rightarrow \star  \star'$ and $\beta: \star  \star' \Rightarrow \star'$, we have $\beta \odot \eta^{-1}: \star \Rightarrow \star' $.  
\end{proof}

\subsubsection{Biproducts in 2-categories or 2-Biproducts}
We are now in a good position to define biproducts in 2-categories. Having defined constant 2-morphisms, we can recover algebraic and limit-form definitions of biproducts in Hom-categories. Before defining the algebraic definition of 2-biproducts, we focus on the definition based on 2-products/coproducts. We substitute categories with 2-categories and limits with 2-limits. One can consider four possible limits in 2-categories: strict, weak, lax, and oplax \cite{borceux1994handbook}. We restrict our attention to weak 2-limits; the strict version is obtained by letting all weakening 2-isomorphisms be identities. The following definition by Borceux is given in Categorical Algebra volume I~\cite{borceux1994handbook}.
\begin{defi}
	In a 2-category, a \textit{weak 2-product} of a pair of objects $A, B$ is an object $A \times B$ equipped with 1-morphism projections $(p_A: A\times B \longrightarrow A, p_B: A\times B \longrightarrow B)$ such that: 
	\begin{itemize}
		\item for every cone $(X, f: X \longrightarrow A,  g: X \longrightarrow B)$, there exist a 1-morphism $b: X \longrightarrow A \times B$ and 2-isomorphisms $\{\xi\}$ such that $(\xi_A: p_A   b \Longrightarrow f , \xi_B: p_B   b \Longrightarrow g)$ (the red cone in Figure \ref{fig:product}).
		\item Moreover, for any other cone $(X, f': X \longrightarrow A, g': X \longrightarrow B)$ with a corresponding 1-morphism $b': X \longrightarrow A\times B$ and 2-isomorphisms $(\xi'_A: p_A   b' \Longrightarrow f', \xi'_B: p_B   b' \Longrightarrow g')$ (the blue cone in Figure \ref{fig:product}) and given 2-morphisms $(\Sigma_A: f \Longrightarrow f', \Sigma_B: g \Longrightarrow g')$, 
	\end{itemize}
	there exists a unique 2-morphism $\gamma: b \Longrightarrow b'$ which satisfies the following condition:
	\begin{equation} \label{eq:product} (p_A   \gamma)  = ( {\xi'}_A)^{-1}\odot \Sigma_A \odot (\xi_A ) \end{equation}
	\begin{figure}[h]
		\centering
		\begin{tikzpicture}
			\node [] (0) at (-2, -1) {$A$};
			\node [] (1) at (2, -1) {$B$};
			\node [] (2) at (0, 1) {$A \times B$};
			\node [] (3) at (0, 3) {$X$};
			\node [] (4) at (1, 1.8) {};
			\node [] (5) at (-1, 1.8) {};
			\node [] (6) at (-1.5, 1.5) {};
			\node [] (7) at (-2, 2) {};
			\node [] (8) at (1.5, 1.75) {};
			\node [] (9) at (2, 2) {};
			\node [] (10) at (-0.25, 2) {};
			\node [] (11) at (0.25, 2) {};
			\node [red] (12) at (-0.8, 2.6) {$f$};
			\node [blue] (13) at (-1.9, 2.6) {$f'$};
			\node [red] (14) at (0.8,  2.6) {$g$};
			\node [blue] (15) at (1.95,2.6) {$g'$};
			\node [] (16) at (0.5, 1) {};
			\node [] (17) at (-0.5, 1) {};
			\node [] (18) at (1, 0.5) {};
			\node [] (19) at (-1, 0.5) {};
			\node [] (20) at (2.5, 1) {};
			\node [] (21) at (-2.5, 1) {};
			
			\draw [->, red, dashed]  (3)  to [ bend right=20]  (0);
			\draw [->, blue]  (3)  to [bend right=70] node[left] {} (0);
			\draw [->] (2) to [bend right=20]  node[below right] {$p_A$} (0);
			\draw [->, red, dashed]  (3)  to [bend left=20] (1);
			\draw [->, blue]  (3)  to [bend left=70] (1);
			\draw [->] (2) to [bend left=20] node[below left] {$p_B$}(1);
			\draw [blue, ->] (3) to [bend left=30] node[right] {$b'$}(2);
			\draw[red, dashed, ->] (3) to [bend right=30] node[left] {$ b$}(2); 
			\draw[double,thin,-> ](6) to node[below, midway, sloped] {$\Sigma_A$} (7);
			\draw[double,thin,->] (8) to node[below, midway, sloped] {$\Sigma_B$}(9);
			\draw[double,thin,->] (10) to node[above] {$ \gamma$}(11);
			\draw[red, dashed, double,thin,->, bend right=20](16) to node[below, midway, sloped] {$ \xi_B$} (4);
			\draw[red, dashed, double,thin,->, bend left=20] (17) to node[below, midway, sloped] {$\xi_A$}(5);
			\draw[blue, double,thin,->, bend right=20](18) to node[below right, midway, sloped] {$ \xi'_B$} (20);
			\draw[blue, double,thin,->, bend left=20] (19) to node[below left, midway, sloped] {$\xi'_A$}(21);
		\end{tikzpicture}
		\caption{Weak 2-product in 2-categories.}\label{fig:product}	
	\end{figure}
\end{defi}
\begin{defi}
	In a 2-category, a \textit{strict 2-product} of a pair of objects is a  weak 2-product whose weakening 2-isomorphisms $\xi_A$ and $\xi_B$ are identities.
\end{defi}
\begin{rem}
Observe that unlike product/coproduct definition, because a 1-morphism from $X$ to $A \times B$ is not unique, corresponding to each of these 1-morphisms, there exists a cone with the same apex $X$.  Also, for weak 2-limits, each of the side triangles commutes up to a 2-isomorphism.  
\end{rem}
\begin{defi}
	A \textit{locally semiadditive 2-category} is a 2-category whose $\hom$-categories are semiadditive(have finite biproducts defined in Section~\ref{sect:biproduct}). 
\end{defi}
\begin{defi}
	A \textit{locally semiadditive and compositionally distributive 2-category} is a locally semiadditive 2-category whose 2-morphisms distribute over addition of 2-morphisms. That is Equations \ref{eq12} and \ref{eq13} hold in a compositionally distributive 2-category:
	\begin{align}
		\label{eq12}  &  \gamma  (\alpha +\beta)= \gamma  \alpha + \gamma   \beta \\
		\label{eq13}	&    \alpha \odot (\beta+ \gamma)= \alpha \odot \beta + \alpha \odot \gamma
	\end{align}
\end{defi}
\begin{rem}
	In a locally semiadditive 2-category, biproducts in Hom-categories are biproducts of 1-morphisms. In the proceeding sections, projections $\pi$ and injections $\nu$ are 2-morphisms indexed by 1-morphisms.
\end{rem}
\begin{proposition}\label{pro-dist}
	In a locally semiadditive and compositionally distributive 2-category, the composition of 1-morphisms is distributive over biproducts of 1-morphisms.
	$$ f   (g \oplus h) \cong f   g \oplus f   h $$
\end{proposition}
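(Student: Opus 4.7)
The plan is to exhibit $f(g\oplus h)$, together with whiskered projections and injections, as a biproduct of $fg$ and $fh$ in the semiadditive hom-category $\hom(A,C)$; uniqueness of biproducts in this hom-category then yields the 2-isomorphism $f(g\oplus h)\cong fg\oplus fh$.

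Let $g,h\colon A\to B$ and $f\colon B\to C$, and let $\pi_g,\pi_h,\nu_g,\nu_h$ be the biproduct data for $g\oplus h$ in $\hom(A,B)$. I would propose the candidates obtained by whiskering with $f$, namely $\pi'_g := 1_f\circ \pi_g$, $\pi'_h := 1_f\circ \pi_h$, $\nu'_g := 1_f\circ \nu_g$, $\nu'_h := 1_f\circ \nu_h$. The two identity relations $\pi'_g\odot\nu'_g = 1_{fg}$ and $\pi'_h\odot\nu'_h = 1_{fh}$ then follow at once from the interchange law, since $(1_f\circ\pi_g)\odot(1_f\circ\nu_g) = 1_f\circ(\pi_g\odot\nu_g) = 1_f\circ 1_g = 1_{fg}$, and similarly for the other pair. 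For the completeness relation, interchange together with compositional distributivity (Equation \ref{eq12}) gives
\begin{align*}
\nu'_g\odot\pi'_g+\nu'_h\odot\pi'_h &= 1_f\circ(\nu_g\odot\pi_g) + 1_f\circ(\nu_h\odot\pi_h) \\
&= 1_f\circ(\nu_g\odot\pi_g + \nu_h\odot\pi_h) = 1_f\circ 1_{g\oplus h} = 1_{f(g\oplus h)}.
\end{align*}

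For the two cross relations $\pi'_g\odot\nu'_h = 0$ and $\pi'_h\odot\nu'_g = 0$, I would argue that the whiskering operation $1_f\circ -\colon\hom(A,B)\to\hom(A,C)$ is a functor (by interchange) that respects the additive structure on hom-categories (by compositional distributivity). Such an additive functor between semiadditive categories preserves biproducts and in particular sends the zero 2-morphism $\pi_g\odot\nu_h = 0_{h,g}$ in $\hom(A,B)$ to $0_{fh,fg}$ in $\hom(A,C)$; the cross relations follow. With all five biproduct axioms verified, uniqueness of biproducts in the semiadditive category $\hom(A,C)$ forces $f(g\oplus h)\cong fg\oplus fh$.

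The main obstacle is the preservation-of-zero step above. The zero 2-morphism in a hom-category is specified only algebraically via the biproduct structure, and a priori has no reason to be compatible with horizontal composition; the content of the compositional distributivity axiom is precisely to supply this compatibility, and it is this axiom that does the real work in the proposition — absent it, the cross terms could introduce genuine obstructions to the desired isomorphism.
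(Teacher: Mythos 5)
Your route is genuinely different from the paper's. The paper does not verify the biproduct axioms for the whiskered data at all: it takes the canonical biproduct $fg\oplus fh$ that already exists in $\hom(A,C)$, uses its product and coproduct universal properties to induce comparison $2$-cells $\alpha\colon fg\oplus fh\Rightarrow f(g\oplus h)$ and $\alpha'\colon f(g\oplus h)\Rightarrow fg\oplus fh$ satisfying $(f\pi_g)\odot\alpha=\pi_{fg}$ and $\alpha'\odot(f\nu_g)=\nu_{fg}$ (and likewise for $h$), and then obtains $\alpha'\odot\alpha=1$ by sandwiching the completeness relation $\nu_g\odot\pi_g+\nu_h\odot\pi_h=1_{g\oplus h}$ between these equations and applying Equations~\ref{eq12} and~\ref{eq13}. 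Because only the identity and completeness relations of the canonical biproduct are touched, the cross relations never enter that half of the argument. Your identity and completeness verifications are correct and are essentially the same interchange-plus-distributivity computation, just packaged as ``the whiskered data is itself a biproduct'' rather than ``the comparison cells are mutually inverse.''

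The gap is exactly where you suspected: the cross relations. Your proposed repair --- that whiskering is an additive functor, additive functors preserve biproducts, hence preserve zero $2$-morphisms --- does not go through in the \textbf{CMon}-enriched setting. First, preserving binary sums does not force preservation of the additive unit: from $F(0)=F(0+0)=F(0)+F(0)$ one gets $x=x+x$, which has nonzero solutions in a general commutative monoid (there is no cancellation), so ``additive $\Rightarrow$ sends $0$ to $0$'' is an \emph{Ab}-enrichment fact, not a \textbf{CMon} one. Second, the claim that an additive functor preserves biproducts is circular here: to recognize the image of a biproduct as a biproduct you must verify the cross equations $F(p_Bi_A)=0$, which is precisely the statement you are trying to extract, and in the semiadditive (as opposed to additive) setting the cross equations are independent axioms, not consequences of the other three. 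So as written the five-axiom verification is incomplete. The honest fixes are either to read compositional distributivity as saying that $\gamma\circ(-)$ is a homomorphism of commutative monoids, i.e.\ to add $\gamma\circ 0=0$ to Equation~\ref{eq12} as the nullary case of distributivity, or to switch to the paper's universal-property construction of $\alpha$ and $\alpha'$, which delivers one of the two triangle identities without ever meeting a cross term. (Note that even the paper's omitted direction $\alpha\odot\alpha'=1_{f(g\oplus h)}$ quietly needs the same nullary fact, so this is worth making explicit either way.)
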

\begin{proof} 
To prove this proposition, we need to show 2-morphisms $\alpha$ and $\alpha'$ in Figure~\ref{fig-proposition5} are inverse of each other. From universality of products of 1-morphisms shown in Figure \ref{fig-proposition5}, we have:
	\begin{align}\label{eq:eq7}
		& (f \pi_{h}) \odot \alpha = \pi_{f  h},  &&
		(f   \pi_{g}) \odot \alpha = \pi_{f  g}
	\end{align}
	and universality of coproducts of 1-morphisms results in: 
	\begin{align}\label{eq:eq8}
		&\alpha' \odot (f  \nu_{h}) = \nu_{f  h}, &&
		\alpha' \odot (f   \nu_g)=\nu_{f  g} 
	\end{align}
	Composing Equations \ref{eq:eq7} and \ref{eq:eq8}, we have:
	\begin{align}
		& \alpha'\odot (f   \nu_h) \odot (f    \pi_h) \odot \alpha = \nu_{f  h} \odot \pi_{f  h}\label{eq2} \\
		& \alpha'\odot (f    \nu_g) \odot (f    \pi_g) \odot \alpha = \nu_{f  g} \odot \pi_{f  g} \label{eq3}
	\end{align}
	Considering distributivity conditions \ref{eq12} and \ref{eq13} and adding two sides of Equations \ref{eq2} and \ref{eq3}, we obtain:
	$$ \alpha'\odot (f   [(\nu_g \odot \pi_g )+(\nu_h \odot \pi_h )] \odot \alpha = \nu_{f  g} \odot \pi_{f  g}+\nu_{f  h} \odot \pi_{f  h}\\
	$$
	which is:
	$\alpha' \odot (1_f  \circ 1_{g\oplus h}) \odot \alpha = 1_{(f   g \oplus f   h)}$, hence, $\alpha'  \odot  \alpha=1_{f   g \oplus f   h}$. One can also show that $\alpha   \odot \alpha'=1_{f   (g \oplus h)}$ by using the equations above. 
	\begin{figure}[h]
		\centering
		\begin{tikzpicture}[scale=0.8]
			\node (0) at (0, 0) []{$f   g$};
			\node (1) at (3, 3) []{$f   (g \oplus h)$};
			\node (2) at (6, 0) []{$f   h$};
			\node (3) at (3, 5)[]{\small{$f   g \oplus f   h$}};
			\node (4) at (0.9, 2.1) []{};
			\node (5) at (1.9, 0.6) {};
			\node (6) at (5.1, 2) {};
			\node (7) at (4.2, 0.6) {};
			
			\draw[->, bend right=15, double](0) to node[sloped, below]{\small{$f   \nu_g$}} (1);
			\draw[->, bend right=15, double, color=red](1) to node[sloped, above]{\small{  $f   \pi_g$}} (0);
			\draw[->, bend left=15, double, color=red](1) to node[sloped, above ]{\small{$f   \pi_h$}} (2);
			\draw[->, bend left=15, double](2) to node[sloped, below]{\small{$f   \nu_h$}} (1);
			\draw[->, bend left=20, double, color=red](3) to node[right]{$\alpha$}(1);
			\draw[->, bend left=20, double](1) to node[left]{$\alpha'$}(3);
			\draw[->, bend left=30, double](0) to node[sloped, above]{\small{$\nu_{f  g}$}}(3);
			\draw[->, bend right=30, double](2) to node[sloped, above]{\small{$\nu_{f  h}$}}(3);
			\draw[->, bend right=60, double, color=red](3) to node[sloped, above]{\small{$\pi_{f  g}$}}(0);
			\draw[->, bend left=60, double,color=red](3) to node[sloped, above]{\small{$\pi_{f  h}$}}(2);
		\end{tikzpicture}
		\caption{Proof of Proposition \ref{pro-dist}.}\label{fig-proposition5}
	\end{figure}
\end{proof}
\begin{proposition}
	In a locally semiadditive and compositionally distributive 2-category, zero 1-morphisms are constant.
\end{proposition}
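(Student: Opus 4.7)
The plan is to reduce weak constantness of the zero 1-morphism $0_{B,A}$ to the single claim that for every $g : B \to Z$ one has $g\, 0_{B,A} \cong 0_{Z,A}$ in $\hom(A,Z)$, and its mirror $0_{B,A}\, f \cong 0_{B,W}$ in $\hom(W,B)$ for every $f: W \to A$. Granted these, the 2-isomorphisms $g\, 0_{B,A} \Rightarrow h\, 0_{B,A}$ and $0_{B,A}\, f \Rightarrow 0_{B,A}\, f'$ that witness weak right and left constantness of $0_{B,A}$ arise by composing each side through the common zero 1-morphism, whose uniqueness up to 2-isomorphism is Proposition \ref{pro2}.

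My first step is to regard post-composition by a fixed $g$ as a functor $F_g := g\,(-)\colon \hom(A,B) \to \hom(A,Z)$ between Hom-categories, acting on 2-morphisms by whiskering. Functoriality is the interchange law. Equation \ref{eq12} together with bilinearity forces $F_g$ to be additive on the Hom-commutative-monoids (preserving addition and the zero 2-morphism), and Proposition \ref{pro-dist} shows that on objects it takes biproducts to biproducts. Hence $F_g$ is an additive functor between semiadditive categories.

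Next I would use the small lemma that in any semiadditive category an object $X$ is a zero object if and only if $\id_X = 0_{X,X}$: for the nontrivial direction, if $\id_X = 0_{X,X}$ then for any $k: X \to Y$ bilinearity gives $k = k\,\id_X = k\, 0_{X,X} = 0_{X,Y}$, so $\hom(X,Y)$ is a singleton, and symmetrically in the other variable. Applying this to $F_g(0_{B,A})$: the Hom-monoid $\hom_{\hom(A,B)}(0_{B,A},0_{B,A})$ is trivial, so $1_{0_{B,A}}$ coincides with the zero 2-morphism there. Functoriality sends it to $1_{g\, 0_{B,A}}$, while additivity sends it to the zero 2-morphism of $\hom_{\hom(A,Z)}(g\,0_{B,A}, g\,0_{B,A})$; the criterion then yields $g\, 0_{B,A} \cong 0_{Z,A}$.

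By the symmetric construction -- pre-composition $(-)\,f\colon \hom(A,B)\to\hom(W,B)$ and the left/right dual of Proposition \ref{pro-dist}, proved by swapping the roles of projections and injections in Figure \ref{fig-sadra} -- I obtain $0_{B,A}\, f \cong 0_{B,W}$, and composing the resulting families of isomorphisms produces the required weak 2-isomorphisms. The main obstacle is really bookkeeping: Equations \ref{eq12} and \ref{eq13} are stated only on one side, so to make the pre-composition half go through one needs the mirror forms $(\alpha + \beta)\,\gamma = \alpha\,\gamma + \beta\,\gamma$ and $(\beta + \gamma) \odot \alpha = \beta \odot \alpha + \gamma \odot \alpha$, which I would read into the axiom of compositional distributivity as the natural bilinearity for 2-morphisms. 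The zero-object lemma itself is a two-line chase and raises no difficulty.
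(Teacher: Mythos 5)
Your proof is correct in substance but takes a genuinely different route from the paper's. The paper argues entirely at the level of 1-morphisms and is two lines long: by Proposition \ref{pro-dist}, $g\,(f \oplus 0) \cong g\,f \oplus g\,0$, while $f \oplus 0 \cong f$, and it concludes $g\,0 \cong 0$ from uniqueness of zero 1-morphisms. You instead descend to the level of 2-morphisms: you package whiskering as an additive functor between semiadditive Hom-categories and transport the criterion ``$X$ is a zero object iff $\id_X$ is the additive unit of $\hom(X,X)$'' along it. What your approach buys is rigour precisely at the step the paper compresses: the inference ``$g\,f \cong g\,f \oplus g\,0$, hence $g\,0 \cong 0$'' implicitly uses a cancellation of biproducts that is not valid in an arbitrary semiadditive category, whereas your identity-equals-zero criterion avoids cancellation altogether; your explicit treatment of the left-handed half is also more careful than the paper, which proves only the right-handed statement. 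What it costs is one hypothesis you only partly acknowledge: besides the mirror forms of Equations \ref{eq12}--\ref{eq13} that you flag, your step ``additivity sends it to the zero 2-morphism'' needs the nullary case, namely that whiskering by $g$ carries the additive unit of $\hom(0_{B,A},0_{B,A})$ to the additive unit of $\hom(g\,0_{B,A},g\,0_{B,A})$. Binary distributivity alone only yields that $1_{g\,0_{B,A}}$ is additively idempotent, which does not force it to be the unit. This nullary preservation is exactly the ``bilinearity'' reading you propose, and the paper's own argument needs an analogous strengthening (preservation of the empty biproduct), so the gap is shared rather than introduced by you; it is worth stating explicitly in either proof.
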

\begin{proof}
	Consider the proposition above, for every $g$ we have $g   (f \oplus 0) \cong g   f \oplus g   0$. On the other hand, $f \oplus 0 \cong f$ which shows $g   0 \cong 0$ because zero 1-morphisms are unique up to 2-isomorphisms. 
\end{proof}
We have developed enough structures to be able to introduce the definition of 2-biproducts:   
\begin{defi}\label{definition-bipro}
	In a locally semiadditive and compositionally distributive 2-category, a \textit{weak 2-biproduct} of a pair of objects $(A, B)$  is a tuple
	$$(A\boxplus B, p_A, p_B, i_A, i_B, \theta_A, \theta_B, \theta_{AB}, \theta_{BA}, \theta_{P})$$ such that:
	\begin{itemize}
		\item 
		\textit{1-Morphism projections and injections:}
		\begin{align*}
			& p_{A}: A \boxplus B \longrightarrow A,  && p_{B}: A \boxplus B \longrightarrow B 
			\\
			& i_{A}: A \longrightarrow A \boxplus B,  && i_{B}: B \longrightarrow A \boxplus B 
		\end{align*}
		\item 
		\textit{Weakening 2-isomorphisms:}
		\begin{align*}
			& \theta_{A}: p_A   i_A \Rightarrow \id_A, && \theta_{B}: p_B   i_B \Rightarrow \id_B, 
			\\
			& \theta_{BA}: p_B   i_A \Rightarrow 0_{B, A}, && \theta_{AB}: p_A   i_B \Rightarrow 0_{A, B}, \\
		&\theta_{P}: i_A   p_A \oplus i_B   p_B \Rightarrow \id_{A \boxplus B} 
        \end{align*}
		\item \textit{Conditions for 2-biproducts:}\\
		\begin{align}\label{conditions-of-2-biproducts}
			& p_A   \theta_P   i_A= \begin{bmatrix}
				(p_A   i_A)   \theta_A & 
				0  \\
				0 & 0 
			\end{bmatrix}, \hspace{1cm}
			&  p_B   \theta_P   i_B= \begin{bmatrix}
				0 & 0 \\
				0 &
				(p_B   i_B)   \theta_B 
			\end{bmatrix}
		\end{align}
	\end{itemize}
\end{defi}
\begin{defi}
	In a locally semiadditive and compositionally distributive 2-category, a \textit{strict 2-biproduct} is a weak 2-biproduct whose weakening 2-isomorphisms $\{ \theta \}$ are 2-identities. 
\end{defi}
\begin{defi}
	A \textit{semiadditive 2-category} is a locally semiadditive and compositionally distributive 2-category with weak binary 2-biproducts and a zero object. 
\end{defi}
To prove the consistency of this definition with weak 2-products, we need the following lemmas.
\begin{lemma}\label{lemma33}
In a 2-category, if $p_A   i_A: A \longrightarrow A$ and  $\theta_A: p_A   i_A \Rightarrow \id_A$, then $(p_A   i_A )  \theta_A=\theta_A (p_A   i_A ) $. 
\end{lemma}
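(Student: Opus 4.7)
The plan is to exploit the middle-four interchange law: compute the horizontal composite $\theta_A \theta_A$ in two different ways, and then cancel $\theta_A$ vertically, using that $\theta_A$ is invertible (it is a weakening 2-isomorphism coming from the 2-biproduct structure).

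First I would observe that both whiskerings $(p_A i_A)\,\theta_A$ and $\theta_A\,(p_A i_A)$ are parallel 2-morphisms $(p_A i_A)(p_A i_A)\Rightarrow p_A i_A$, since $(p_A i_A)\,\id_A = p_A i_A = \id_A\,(p_A i_A)$. The key move is to write $\theta_A$ trivially as a vertical composite in two ways, namely $\theta_A = \theta_A \odot 1_{p_A i_A}$ and $\theta_A = 1_{\id_A} \odot \theta_A$, and apply interchange to the horizontal composite $\theta_A\theta_A$. Picking the decompositions asymmetrically across the two factors yields $\theta_A \theta_A = \theta_A \odot \bigl((p_A i_A)\,\theta_A\bigr)$, and picking them the other way yields $\theta_A \theta_A = \theta_A \odot \bigl(\theta_A\,(p_A i_A)\bigr)$; in both expansions one uses that whiskering by a 1-identity is inert.

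Equating these two expressions gives $\theta_A \odot \bigl((p_A i_A)\,\theta_A\bigr) = \theta_A \odot \bigl(\theta_A\,(p_A i_A)\bigr)$, and precomposing vertically with $\theta_A^{-1}$ cancels the leading $\theta_A$ to deliver the required equality.

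The main obstacle is purely notational: keeping track of which copy of $\theta_A$ is being whiskered by $p_A i_A$ on which side after each application of interchange. The content of the argument is entirely formal. One subtlety worth flagging is that the cancellation step genuinely needs $\theta_A$ to be a 2-isomorphism; without invertibility, the analogous identity $f\alpha = \alpha f$ for a 2-morphism $\alpha : f \Rightarrow \id$ can fail (a constant endofunctor in $\mathbf{Cat}$ with a non-identity endomorphism at its base point is a small witness), so the appeal to the weakening 2-isomorphism structure of the 2-biproduct is essential.
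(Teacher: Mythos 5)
Your argument is correct and is essentially the paper's own: the paper proves the identity $(p_A i_A)\,\lambda = [\theta_A^{-1}\, g]\odot\lambda\odot[\theta_A\, f]$ (which is precisely the interchange computation of the horizontal composite $\theta_A\,\lambda$ in two ways) and then specializes $f = p_A i_A$, $g=\id_A$, $\lambda=\theta_A$, which reproduces your two expansions of $\theta_A\,\theta_A$ followed by vertical cancellation of the invertible $\theta_A$. Your closing observation that invertibility is genuinely needed, witnessed by a constant endofunctor with a non-identity component, is correct and a useful sanity check, though the paper does not remark on it.
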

\begin{proof}
It is straightforward if you follow the composition diagram.
\begin{center}
\begin{tikzpicture}
    \node (0) at (0, 0) {$A$};
    \node (1) at (2, 0) {$A$}; 
    \node (2) at (4, 0) {$A$}; 
    \node (3) at (6, 0) {$A$};
    \node (6) at (3, 0.5){};
    \node (7) at (3, -0.5){};

    \draw[->] (0) to node[above]{$p_Ai_A$}(1);
    \draw[->, bend left=45] (1) to node[above]{$p_Ai_A$} (2);
    \draw[->, bend right=45] (1) to node[below]{$id_A$} (2); 
    \draw[->, double] (6) to node[right]{$\theta_A$} (7);
    \draw[->] (2) to node[above]{$p_Ai_A$}(3);
\end{tikzpicture}    
\end{center}

\end{proof} 
\begin{lemma}\label{lemma2}
	In a locally semiadditive 2-category,  $\theta_{BA}: p_B   i_A \Rightarrow 0_{B, A}$ and $\theta_{AB}: p_A   i_B \Rightarrow 0_{A, B}$ are zero 2-morphisms. 
\end{lemma}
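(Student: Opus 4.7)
The plan is to identify the target 1-morphism $0_{B,A}$ with the zero object of the hom-category $\hom(A,B)$, so that any 2-morphism into it is forced to be the zero 2-morphism by terminality. The analogous identification and argument will handle $\theta_{AB}$ in $\hom(B,A)$.

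First, in a locally semiadditive 2-category each hom-category $\hom(A,B)$ is semiadditive, hence \textbf{CMon}-enriched by Proposition \ref{pro-26} and in particular pointed. In the ambient compositionally distributive, locally semiadditive 2-category with a zero 0-object (the setting of Definition \ref{definition-bipro}), the zero 1-morphism $0_{B,A}$ -- understood as the 1-morphism factoring through the zero 0-object, and shown to be constant by the preceding propositions -- can be identified with the zero object of $\hom(A,B)$: both exist, both are unique up to the appropriate isomorphism, and the two constructions agree up to a canonical 2-isomorphism.

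With this identification, $0_{B,A}$ is terminal in $\hom(A,B)$, so there is a unique 2-morphism from $p_B \, i_A$ to $0_{B,A}$, and by the definition of zero morphisms in a pointed category this unique arrow is precisely the zero 2-morphism. Hence the weakening 2-isomorphism $\theta_{BA}$ must coincide with it. The same reasoning carried out in $\hom(B,A)$ yields the statement for $\theta_{AB}$.

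The main point requiring care is the identification of $0_{B,A}$ with the zero object of the hom-category; once that is pinned down, the conclusion follows from uniqueness of maps into a terminal object. There is no inconsistency with $\theta_{BA}$ being a 2-isomorphism: if $p_B \, i_A$ is isomorphic to $0_{B,A}$, then $p_B \, i_A$ is itself a zero object of $\hom(A,B)$, and any 2-morphism between two zero objects is simultaneously an isomorphism and a zero 2-morphism.
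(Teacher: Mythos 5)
Your proposal is correct and follows essentially the same route as the paper: $0_{B,A}$ is the zero object of the semiadditive hom-category $\hom(A,B)$, so terminality makes the 2-morphism $p_B\, i_A \Rightarrow 0_{B,A}$ unique, and since the zero 2-morphism is one such 2-morphism, $\theta_{BA}$ must coincide with it (likewise for $\theta_{AB}$). The only divergence is your detour through the 1-morphism factoring through the zero 0-object, which is unnecessary here and not even available under the lemma's stated hypotheses (only local semiadditivity is assumed); the paper simply reads $0_{B,A}$ as the zero object of the hom-category from the outset.
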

\begin{proof}
	Because $0_{A, B}$ is a zero object in Hom-category $Hom(A, B)$ and because each set of 2-morphisms should have a zero 2-morphism and $\theta_{A B}$ is unique, it is indeed a zero 2-morphism. 
\end{proof}
			
\begin{lemma}\label{lemma-34}
In a 2-category, for a unique 2-morphism $\gamma: h \Rightarrow h'$, if there exists a 2-isomorphism $\theta_P: l \Rightarrow id_B$, then $\gamma$ is transformed by the following formula: 
	\begin{equation}
		(\theta_{P}   h') \odot (l   \gamma) \odot (\theta_P^{-1}   h)=\gamma
	\end{equation}
\end{lemma}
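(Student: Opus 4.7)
The plan is to derive the identity by combining the interchange law with invertibility of $\theta_P$, leaving whiskering--functoriality to do the remaining bookkeeping. Since $\theta_P : l \Rightarrow id_B$ and $\gamma : h \Rightarrow h'$ sit in adjacent horizontal positions (so that $lh$ is well-typed and $id_B \circ h = h$, $id_B \circ h' = h'$), their horizontal composite $\theta_P \gamma$ admits two equal vertical decompositions, and the expression in the statement is essentially one of these decompositions ``conjugated'' by $\theta_P^{-1}$ on the source side.

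First I would apply the naturality form of the interchange law to the horizontally composable 2-morphisms $\theta_P$ and $\gamma$. In whiskering notation, the two ways of factoring $\theta_P \gamma$ vertically yield
\begin{equation*}
(\theta_P   h') \odot (l   \gamma) \;=\; (id_B   \gamma) \odot (\theta_P   h) \;=\; \gamma \odot (\theta_P   h),
\end{equation*}
the last equality using the convention from the introduction that whiskering by a 1-identity is trivial. I would then post-compose both sides vertically on the right with $\theta_P^{-1}   h : h \Rightarrow l h$, obtaining
\begin{equation*}
(\theta_P   h') \odot (l   \gamma) \odot (\theta_P^{-1}   h) \;=\; \gamma \odot (\theta_P   h) \odot (\theta_P^{-1}   h).
\end{equation*}
Functoriality of whiskering with respect to vertical composition rewrites $(\theta_P   h) \odot (\theta_P^{-1}   h)$ as $(\theta_P \odot \theta_P^{-1})   h = 1_{id_B}   h = 1_h$, so the right-hand side collapses to $\gamma \odot 1_h = \gamma$, which is the claim.

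There is essentially no obstacle here: the lemma is a direct consequence of the naturality form of the interchange law together with invertibility of $\theta_P$. The only genuine bookkeeping is confirming the source/target labels of the whiskered 2-cells line up --- specifically, that $lh$ is the shared intermediate 1-morphism between $\theta_P^{-1}   h$ and $l   \gamma$, and $lh'$ the shared intermediate between $l   \gamma$ and $\theta_P   h'$ --- which is automatic from the types of $\theta_P$ and $\gamma$ and can be presented cleanly via the pasting lemma cited in the introduction.
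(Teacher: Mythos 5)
Your proof is correct and takes essentially the same approach as the paper: the paper's proof is a pasting diagram, and your interchange-law computation $(\theta_P h')\odot(l\gamma)=(id_B\gamma)\odot(\theta_P h)=\gamma\odot(\theta_P h)$ followed by cancelling $(\theta_P h)\odot(\theta_P^{-1}h)=1_h$ is exactly the algebra that diagram encodes. You have simply written out explicitly what the paper leaves to the pasting lemma.
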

\begin{proof} Follows from the Figure~\ref{fig:lemma5}.
	\begin{figure}[h!]
    \centering
		\begin{tikzpicture}[scale=0.7]
			\node (0) at (0, 0) []{$A$};
			\node (1) at (3, 0) []{$B$};
			\node (2) at (4, 0) []{$=$};
			\node (3) at (5, 0) []{$A$};
			\node (4) at (10, 0) []{$B$};
			\node (5) at (6, 1) []{};
			\node (6) at (6, 0) []{};
			\node (7) at (6, -1) []{};
			\node (8) at (6, -2) []{};
			\node (9) at (6, -3) []{};
			\node[rotate=-90](10) at (1.5, 0) []{$\Rightarrow$};
			\node (11) at (1.2, 0) {$\gamma$};
			\node[rotate=-90] (12) at (7.5, 1.2) []{$\Rightarrow$};
			\node (13) at (8.4, 1.2) []{$\theta_P^{-1}h$};
			\node (14) at (5.9, 0.7)[]{$l  h$};
			\node (15) at (5.9, -0.7)[]{$l  h'$};
			\node[rotate=-90] (16) at (7.5, 0)[]{$\Rightarrow$};
			\node (17) at (8.4, 0)[]{$l\gamma$};
			\node[rotate=-90](18) at (7.5, -1.1)[]{$\Rightarrow$};
			\node (19) at (8.4, -1.2)[]{$\theta_Ph'$};
			
			\draw[->, bend left=60] (0) to node[above]{$h$}  (1);
			\draw[->, bend right=60] (0) to node[below]{$h'$} (1);
			\draw[->, bend left=110] (3) to node[above]{$h$} (4);
			\draw[->, bend left=20] (3) to node[below]{} (4);
			\draw[->, bend right=20] (3) to node[below]{} (4);
			\draw[->, bend right=110] (3) to node[below]{$ h'$} (4);
		\end{tikzpicture}
        \caption{Proof of Lemma~\ref{lemma-34}.}\label{fig:lemma5}
	\end{figure}
\end{proof}
\begin{lemma}\label{lemma35}
	In a locally semiadditive 2-category, considering the definition of 2-biproducts and $r= i_A   p_A \oplus i_B   p_B $, the 2-morphisms  $\Sigma_A: p_A   r \Rightarrow p_A $ and $\Sigma_B: p_B   r \Rightarrow p_B$ are given by the following row matrices:
	\begin{align}
		&&\Sigma_A=\begin{bmatrix}
			\theta_A   p_A &
			0  
		\end{bmatrix}, && \Sigma_B=\begin{bmatrix}
			0 &
			\theta_B   p_B
		\end{bmatrix} 
	\end{align}
\end{lemma}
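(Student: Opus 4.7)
The natural candidate for $\Sigma_A$ is the whiskering $\Sigma_A := p_A \theta_P$, since $\theta_P: r \Rightarrow \id_{A\boxplus B}$ yields a 2-morphism $p_A \theta_P: p_A r \Rightarrow p_A \id_{A\boxplus B} = p_A$. To express this as a row matrix, the plan is to first apply Proposition \ref{pro-dist} to present the source as a biproduct, $p_A r \cong p_A i_A p_A \oplus p_A i_B p_B$, via a distributivity 2-isomorphism $\alpha$. Tracing the proof of Proposition \ref{pro-dist}, the injections of this biproduct correspond under $\alpha$ to the whiskerings $p_A \nu_{i_A p_A}$ and $p_A \nu_{i_B p_B}$ of the injections of $r = i_A p_A \oplus i_B p_B$ inside the semiadditive hom-category.

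The two entries of the row matrix of $\Sigma_A$ are therefore $\Sigma_A \odot (p_A \nu_{i_A p_A})$ and $\Sigma_A \odot (p_A \nu_{i_B p_B})$, which by interchange equal $p_A(\theta_P \odot \nu_{i_A p_A})$ and $p_A(\theta_P \odot \nu_{i_B p_B})$. To identify these, I would invoke the defining condition $p_A \theta_P i_A = \begin{pmatrix}(p_A i_A)\theta_A & 0 \\ 0 & 0\end{pmatrix}$ of Definition \ref{definition-bipro}, together with Lemma \ref{lemma33} (to commute the $p_A i_A$-whiskering past $\theta_A$) and Lemmas \ref{lemma2}--\ref{lemma3} (so that contributions involving $1_{p_A i_B}$ vanish once compositional distributivity is applied). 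This pins down the first component as $\theta_A p_A$ and forces the second to be the zero 2-morphism, giving the claimed row matrix for $\Sigma_A$. The formula for $\Sigma_B$ then follows by the symmetric argument, swapping $A$ with $B$ throughout.

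The main obstacle is the bookkeeping: the condition of Definition \ref{definition-bipro} presents both the source $p_A r i_A$ and the target $p_A i_A$ implicitly as two-summand biproducts (the target uses $p_A i_B \cong 0$), and this presentation must be carefully aligned with the biproduct structures induced on $p_A r$ by Proposition \ref{pro-dist} before the condition can be read off as a pointwise formula for the components of $\Sigma_A$.
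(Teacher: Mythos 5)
Your starting point, $\Sigma_A := p_A\,\theta_P$, is not what the paper does, and it creates a genuine problem of order. The paper's proof never touches $\theta_P$: it forms the 2-morphism $\lambda=\nu_1 \odot (\theta_A p_A) \odot \pi_1 + \nu_2 \odot (\theta_{AB} p_B) \odot \pi_2$, i.e.\ the diagonal matrix with entries $\theta_A p_A$ and $\theta_{AB}p_B$, from $p_A i_A p_A \oplus p_A i_B p_B$ to $p_A\oplus 0$, using only the biproduct structure of the hom-category and the weakening 2-isomorphisms $\theta_A,\theta_{AB}$ (Diagram \ref{dig-lemma}), and then \emph{defines} $\Sigma_A:=\pi_1\odot\lambda=\begin{pmatrix}\theta_A p_A & 0\end{pmatrix}$. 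This bottom-up order is essential to how Lemma \ref{lemma35} is used: in the direction $1\Rightarrow 3$ of Theorem \ref{theorem-main}, $\theta_P$ does not yet exist at the point the lemma is invoked --- it is \emph{obtained} by feeding $\Sigma_A,\Sigma_B$ into the universal property of the weak 2-product. Taking $\Sigma_A=p_A\theta_P$ as the definition is therefore circular in the lemma's actual role; the identity $p_A\theta_P=\Sigma_A$ is an output of the universal property (Equation \ref{eq:product} with identity weakenings), not an input.

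Even if you read the lemma as a standalone statement in which all the data of Definition \ref{definition-bipro}, including $\theta_P$, is given, your key step does not go through as written. The defining condition controls the double whiskering $p_A\,\theta_P\, i_A : p_A r i_A \Rightarrow p_A i_A$, i.e.\ the restriction of $p_A\theta_P$ along $i_A$; this is a different operation from the vertical composites $(p_A\theta_P)\odot(p_A\nu_{i_Ap_A})$ and $(p_A\theta_P)\odot(p_A\nu_{i_Bp_B})$ that are the row-matrix entries you need, and Definition \ref{definition-bipro} imposes no condition at all on $p_A\,\theta_P\, i_B$. Recovering $p_A\theta_P$ from its restrictions along $i_A$ and $i_B$ would require the joint conservativity of whiskering by the injections (part of the 2-coproduct property being established in Theorem \ref{theorem-main}) plus the missing $i_B$-datum, so this is a missing argument rather than bookkeeping. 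The repair is to construct $\Sigma_A$ directly from $\theta_A$ and $\theta_{AB}$ as the paper does, via the universal property of the biproduct $p_A i_A p_A \oplus p_A i_B p_B$ in $\hom(A\boxplus B,A)$.
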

\begin{proof}
	Given Diagram \ref{dig-lemma}, we have:
	$\lambda=\begin{bmatrix}
		\theta_A   p_A & 0 \\
		0 &  \theta_{AB}   p_B 
	\end{bmatrix} $. To obtain, $\Sigma_A$, we must calculate $\pi_1 \odot \lambda$, which is
	$$
	\pi_1 \odot \lambda = \begin{bmatrix}
		1 & 0
	\end{bmatrix} \odot \begin{bmatrix}
	\theta_A   p_A & 0 \\
	0 &  \theta_{AB}   p_B 
\end{bmatrix}   = \begin{bmatrix}
		\theta_A p_A & 0
	\end{bmatrix} 
	$$
	One can similarly show that for $\pi_2 \odot \lambda^\prime= \Sigma_B$, if $\lambda^\prime = \begin{bmatrix}
		\theta_{B, A}   p_A & 0 \\
		0 &  \theta_{B}   p_B 
	\end{bmatrix} $. 
	\begin{figure}[h!]
		\centering
		\begin{tikzpicture}[scale=0.65]
			\node (1) at (0, 0) []{$p_A$};
			\node (2) at (2, 2) []{$p_A   i_A   p_A$};
			\node (3) at (4, 4) []{$p_A  r$};
			\node (4) at (8, 0) []{$0_{A, B}$};
			\node (5) at (6, 2) []{$p_A   i_B   p_B$};
			\node (6) at (4, 6) []{$p_A \oplus 0_{A}$};
			
			\draw[ ->, double] (2) to node[below right] {\small{$\theta_A   p_A$}}(1);
			\draw[ ->, bend right=20, double ] (2) to node[above, midway]{\small{$\nu_1$}~~~}(3);
			\draw[ ->, bend right=30, double ] (3) to node[left]{\small{$\pi_1$}}(2);
			\draw[ ->, double] (5) to node[left] {\small{$ \theta_{AB}   p_B$}}(4);
			\draw[ ->, bend left=20, double ] (5) to node[above, midway]{\small{~~~$\nu_2$}}(3);
			\draw[ ->, bend left=30, double ] (3) to node[right]{\small{$\pi_2$}}(5);
			\draw[->, double](3) to node[right]{$\lambda$}(6) ;
			\draw[ ->, bend left=40, double](1) to node[left]{$\nu_1$}(6);
			\draw[ ->, bend right=40, double](4) to node[right]{$\nu_2$}(6);
		\end{tikzpicture}
		\caption{The 2-morphism $\lambda$ in the figure is $\lambda=\nu_1 \odot \theta_A   p_A \odot \pi_1 + \nu_2 \odot \theta_{AB} p_B \odot \pi_2$.}\label{dig-lemma} 
	\end{figure} 
\end{proof}
\begin{lemma}\label{lemma-35}
	In a semiadditive 2-category, for every 1-morphism written as $h=i_A   f \oplus i_B   g $ and given a 2-isomorphism $\theta_P:  i_A p_A \oplus i_B p_B \Rightarrow \id_{A\boxplus B}$ we have:
	\begin{equation}
		\theta_P   h =\begin{bmatrix}
			i_A   \theta_A   f & 0\\
			0 & i_B   \theta_B   g
		\end{bmatrix} 
	\end{equation}
\end{lemma}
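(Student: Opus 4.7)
The approach is a direct matrix computation in the spirit of the preceding lemmas. Since the target $h = i_A f \oplus i_B g$ is a biproduct in the Hom-category $\mathrm{Hom}(X, A\boxplus B)$ and, by Proposition \ref{pro-dist}, the source $(i_A p_A \oplus i_B p_B) h$ also decomposes as a biproduct, the 2-morphism $\theta_P h$ is determined by a collection of component 2-morphisms which one arranges in a 2-by-2 matrix. The plan is to identify each of the four entries using the 2-biproduct conditions from Definition \ref{definition-bipro} together with the distributivity of composition over biproducts.

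First I would apply Proposition \ref{pro-dist} twice to split the source fully into four summands
\[
(i_A p_A \oplus i_B p_B) h \cong (i_A p_A i_A f \oplus i_A p_A i_B g) \oplus (i_B p_B i_A f \oplus i_B p_B i_B g).
\]
By Lemmas \ref{lemma2} and \ref{lemma3}, the 2-morphisms $\theta_{AB}$ and $\theta_{BA}$ are zero, so the two ``mixed'' summands $i_A p_A i_B g$ and $i_B p_B i_A f$ are 2-isomorphic to zero 1-morphisms. This collapses the source, up to 2-isomorphism, to $i_A p_A i_A f \oplus i_B p_B i_B g$, which has the two-summand shape matching the columns of the claimed diagonal matrix.

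Next I would identify each matrix entry by sandwiching $\theta_P h$ between the appropriate source injection and target projection $\pi_1, \pi_2$. For the diagonal entry $(1,1)$, combining the 2-biproduct condition for $p_A \theta_P i_A$ in Definition \ref{definition-bipro} with right-whiskering by $f$ and left-whiskering by $i_A$ produces $i_A (p_A i_A) \theta_A f$; applying Lemma \ref{lemma33} to swap $(p_A i_A) \theta_A$ with $\theta_A (p_A i_A)$ and absorbing the redundant factor identifies the entry with $i_A \theta_A f$. The $(2,2)$ entry is obtained symmetrically as $i_B \theta_B g$. For the off-diagonal entries, the relevant sandwiches are $p_A \theta_P i_B$ and $p_B \theta_P i_A$, whose matrix forms from the 2-biproduct conditions contain only zeros, so they vanish after whiskering by $i_A, g$ and $i_B, f$ respectively.

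The main obstacle I expect is bookkeeping: one has to thread together the many distributivity 2-isomorphisms arising at two different levels (the outer biproduct $i_A p_A \oplus i_B p_B$ post-composed with $h$, and the inner biproduct $i_A f \oplus i_B g$ inside $h$), all while respecting the interchange law when composing whiskerings of $\theta_P$ with the individual pieces of $h$. Using the pasting lemma the whole calculation assembles into one commutative diagram whose boundary reads off as the claimed matrix, reducing the verification to the same kind of block-matrix manipulation carried out for Lemma \ref{lemma35}.
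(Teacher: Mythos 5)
Your overall skeleton matches the paper's: decompose the source $(i_A p_A \oplus i_B p_B)h$ via Proposition \ref{pro-dist}, kill the mixed summands using the vanishing of $\theta_{AB},\theta_{BA}$, and identify the surviving diagonal components with whiskerings of $\theta_A$ and $\theta_B$. Where you genuinely diverge is in how the diagonal entries are pinned down. The paper never invokes the coherence conditions \ref{conditions-of-2-biproducts} here; it works entirely inside the Hom-category, writing $\alpha=\theta_P h\odot\nu_1$ and $\beta=\theta_P h\odot\nu_2$ for the two column components (which determine $\theta_P h$ by the universal property of the coproduct $i_Ap_Ah\oplus i_Bp_Bh$) and then exhibiting $\alpha$ directly as the canonical composite $\nu_1\odot(i_A\theta_A f)\odot\pi_1$ obtained by expanding $i_Ap_Ah=i_Ap_Ai_Af\oplus i_Ap_Ai_Bg$. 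You instead route through the global condition $p_A\theta_P i_A=\bigl(\begin{smallmatrix}(p_Ai_A)\theta_A&0\\0&0\end{smallmatrix}\bigr)$ and whisker. Your route buys a derivation of the entries from the axioms of Definition \ref{definition-bipro} rather than from canonicity, but it costs you a nontrivial translation step: the whiskered object $i_A\bigl((p_Ai_A)\theta_A\bigr)f$ is a 2-morphism $i_A(p_Ai_A)(p_Ai_A)f\Rightarrow i_A(p_Ai_A)f$, whereas the matrix entry you need is $i_A\theta_Af:i_A(p_Ai_A)f\Rightarrow i_Af$; these have different sources and targets, and Lemma \ref{lemma33} only lets you move the factor $(p_Ai_A)$ past $\theta_A$, not delete it. ``Absorbing the redundant factor'' therefore hides a genuine conjugation by $i_A\theta_Af$ (or an appeal to naturality of the distributivity isomorphisms) that you would need to spell out; relatedly, sandwiching by the 1-morphisms $p_A,i_A$ determines $\theta_Ph$ only via joint monicity of projections (Lemma \ref{lemma4}), whereas the paper's sandwiching by the Hom-category 2-morphisms $\pi_k,\nu_j$ determines it exactly. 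Given the paper's own level of rigor this is not fatal, but the paper's direct construction of the components avoids the mismatch entirely and is the cleaner path.
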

\begin{proof} The essence of the proof is in the Figure~\ref{fig:proof-lemma-7}, in which $\alpha = \theta_Ph \odot \nu_1$ and $\beta = \theta_ph \odot \nu_2$. Because $\alpha$ and $\beta$ determine $\theta_Ph$, we only need to find them.
\begin{figure}[h!]
    \centering
	    \begin{tikzpicture}[scale=0.5]
			\node (2) at (1, 1.5) []{$i_A   p_A   h$};
			\node (3) at (4, 5) []{\small{$(i_A   p_A \oplus i_B   p_B)  h $}};
			\node (5) at (7, 1.5) []{$i_B   p_B  h$};
			\node (6) at (4, 8) []{$h$};
			
			\draw[ -> , double] (2) to node[above, sloped]{\small{$\nu_1$}}(3);
			\draw[ ->, double] (5) to node[above, sloped]{\small{$\nu_2$}}(3);
			\draw[->, double](3) to node[right]{\small{$\theta_P   h$}}(6) ;
			\draw[ ->, bend left=50, double](2) to node[left]{\small{$\alpha$}}(6);
			\draw[ ->, bend right=50, double](5) to node[right]{\small{$\beta$}}(6);
		\end{tikzpicture}
\caption{Proof of Lemma 7}\label{fig:proof-lemma-7}
\end{figure}
		
We can determine $\alpha$ by writing the explicit forms of $i_Ap_Ah$ and $i_Bp_Bh$ to find the 2-morphism from $i_Ap_Ah$ to $h$. 
	\begin{equation*}
		\begin{tikzpicture}
			\node (0) at (0, 0) {$i_Ap_Ah = i_Ap_Ai_Af \oplus i_Ap_Ai_Bg$};
			\node (1) at (4, 0){$i_Ap_Ai_Af$};
			\node (2) at (7, 0){$i_Af$};
			\node (3) at (9, 0){$h$};
			\draw[->, double] (0) to node[above]{$\pi_1$} (1);
			\draw[->, double] (1) to node[above]{$i_A\theta_Af$} (2);
			\draw[->, double] (2) to node[above]{$\nu_1$} (3);
		\end{tikzpicture}
	\end{equation*}
	Starting from $i_Bp_Bh$ and following the same procedure, we can find $\beta$. 
	\begin{align*}
		& \alpha = \nu_1 \odot i_A\theta_{A}f \odot \pi_1, & \beta = \nu_2 \odot i_B \theta_B g \odot \pi_2
	\end{align*}
\end{proof}
We now present the main theorem of this paper. Similar to the categorical case, which was discussed in References \cite{murfet2006abelian, borceux1994handbook},  we show the consistency of the algebraic definition of weak 2-biproducts with weak 2-products/2-coproducts. The theorem essentially proves both of our definitions are equivalent. 
\begin{theorem}\label{theorem-main}
	In a locally semiadditive and compositionally distributive 2-category, the following conditions for a pair of objects $A$ and $B$ are equivalent:
	\begin{enumerate}
		\item the weak 2-product $(P, p_A, p_B)$ of $A, B$ exists. 
		\item the weak 2-coproduct $(P, i_A, i_B) $ of $A, B$ exists.
		\item the weak  2-biproduct $(P, p_A, p_B, i_A, i_B, \theta_A, \theta_B, \theta_{AB}, \theta_{BA}, \theta_{P})$ of $A, B$ exists.
	\end{enumerate}	
\end{theorem}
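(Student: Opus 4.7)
The plan is to prove $(3) \Rightarrow (1)$ (and dually $(3) \Rightarrow (2)$) by direct construction from the weak 2-biproduct data, and $(1) \Rightarrow (3)$ by using the universal property of the weak 2-product to build the injections together with the structure 2-isomorphism $\theta_P$, with $(2) \Rightarrow (3)$ following by duality. The easy directions are essentially forced by Proposition \ref{pro-dist} together with the weakening 2-isomorphisms; the nontrivial content lies in $(1) \Rightarrow (3)$, and within that the verification of the matrix axioms in Equation \ref{conditions-of-2-biproducts}.

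For $(3) \Rightarrow (1)$, given a cone $(X, f: X \to A, g: X \to B)$ I define the mediating 1-morphism as the biproduct $b := i_A f \oplus i_B g$ inside the semiadditive hom-category $\hom(X, P)$. Proposition \ref{pro-dist} together with the weakenings yields
$$p_A b \;\cong\; p_A i_A f \oplus p_A i_B g \;\cong\; f \oplus 0_{A,X} \;\cong\; f,$$
which provides $\xi_A$; the analogous computation gives $\xi_B$. For the 2-morphism clause, given another $b'$ with weakenings and 2-morphisms $\Sigma_A, \Sigma_B$, I set $\gamma := i_A \Sigma_A \oplus i_B \Sigma_B$. Uniqueness then follows from the universal property of the biproduct in $\hom(X, P)$, since a 2-morphism into a biproduct is determined by its two projections, and these are precisely what Equation \ref{eq:product} fixes. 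The implication $(3) \Rightarrow (2)$ is the dual argument.

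For $(1) \Rightarrow (3)$, I feed the cone $(A, id_A, 0_{B,A})$ into the universal property of $(P, p_A, p_B)$ to obtain $i_A : A \to P$ together with $\theta_A : p_A i_A \Rightarrow id_A$ and $\theta_{BA} : p_B i_A \Rightarrow 0_{B,A}$, and dually $i_B, \theta_B, \theta_{AB}$. To produce $\theta_P$, set $r := i_A p_A \oplus i_B p_B$. Distributivity and the weakenings just constructed yield $p_A r \cong p_A$ and $p_B r \cong p_B$, so both $r$ and $id_P$ mediate the tautological cone $(P, p_A, p_B)$. The 2-morphism clause applied with $\Sigma_A = 1_{p_A}$ and $\Sigma_B = 1_{p_B}$ then produces a unique $\theta_P : r \Rightarrow id_P$; running the same argument in the reverse direction gives a two-sided inverse, so $\theta_P$ is a 2-isomorphism.

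The main obstacle is then verifying the matrix identity $p_A \theta_P i_A = \begin{pmatrix}(p_A i_A)\theta_A & 0 \\ 0 & 0\end{pmatrix}$ and its counterpart for $p_B$. I would apply Proposition \ref{pro-dist} to decompose $p_A r i_A$ as $p_A i_A p_A i_A \oplus p_A i_B p_B i_A$, then use Lemmas \ref{lemma2} and \ref{lemma3} to kill the off-diagonal entries: the second summand is built out of $p_A i_B$ and $p_B i_A$, whose identity 2-cells are zero and whose weakenings $\theta_{AB}, \theta_{BA}$ are themselves zero 2-morphisms, so every composite involving them through a compositionally distributive structure collapses to $0$. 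For the surviving top-left entry, Lemma \ref{lemma-35} applied to $h = i_A$ identifies the relevant component of $\theta_P i_A$ with $i_A \theta_A$ (after Lemma \ref{lemma33} reconciles source/target whiskerings), and Lemma \ref{lemma-34} ensures that left-whiskering by $p_A$ produces exactly $(p_A i_A)\theta_A$. The symmetric computation handles $p_B \theta_P i_B$, and $(2) \Rightarrow (3)$ is obtained by interchanging the roles of projections and injections throughout and running the argument in the opposite 2-category.
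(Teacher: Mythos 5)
Your overall route coincides with the paper's: the same cones $(A,\id_A,0_{B,A})$ and $(B,\id_B,0_{A,B})$ produce the injections and the weakenings $\theta_A,\theta_{BA},\theta_B,\theta_{AB}$; the same comparison between $r=i_A\,p_A\oplus i_B\,p_B$ and $\id_P$ over the tautological cone produces $\theta_P$; and in the converse direction the same mediating data $h=i_A\,f\oplus i_B\,g$, $\xi_A=\theta_A f\odot\pi_1$, $\gamma=\begin{pmatrix} i_A\Sigma_A & 0\\ 0 & i_B\Sigma_B\end{pmatrix}$ appear, with the matrix axiom on $\theta_P$ checked by the same combination of Lemmas \ref{lemma33}, \ref{lemma3}, \ref{lemma-34} and \ref{lemma-35}.

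The one genuine gap is your uniqueness argument for $\gamma$ in $(3)\Rightarrow(1)$. You assert that ``a 2-morphism into a biproduct is determined by its two projections, and these are precisely what Equation \ref{eq:product} fixes.'' This conflates two different kinds of projection. A 2-morphism $\gamma'\colon i_A f\oplus i_B g\Rightarrow i_A f'\oplus i_B g'$ in $\hom(X,P)$ is determined by its components $\pi_\alpha\odot\gamma'\odot\nu_\beta$, i.e.\ by vertical composition with the \emph{local} projections and injections of the hom-category biproduct, whereas Equation \ref{eq:product} only constrains the whiskerings $p_A\gamma'$ and $p_B\gamma'$ by the \emph{global} 1-morphism projections. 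As stated, nothing rules out a competing $\gamma'$ with nonzero off-diagonal components $\pi_1\odot\gamma'\odot\nu_2$ that still realizes the prescribed $p_A\gamma'$ and $p_B\gamma'$. The bridge between the two notions is exactly the conjugation identity of Lemma \ref{lemma-34}: writing $\gamma'=(\theta_P h')\odot\bigl[(i_A p_A\oplus i_B p_B)\,\gamma'\bigr]\odot(\theta_P^{-1}h)$ and applying compositional distributivity, the middle term becomes $i_A(p_A\gamma')\oplus i_B(p_B\gamma')$, which is fixed by Equation \ref{eq:product}; expanding the outer conjugating terms with Lemma \ref{lemma-35} then forces $\gamma'=\gamma$. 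You already invoke both lemmas, but only for the matrix verification in $(1)\Rightarrow(3)$, not here, where they are indispensable. The rest of the proposal, including obtaining the coproduct statement by duality (which the paper leaves implicit), is sound.
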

\begin{proof}  1 $\Longrightarrow$ 3: Assuming a pair of objects $A, B$ has a weak 2-product $P$, we want to show $P$ is also the weak 2-biproduct.
	To this end, check the universal property of weak 2-products for $(A, \id_A: A\longrightarrow A, 0_{B, A}: A \longrightarrow B, i_A: A \longrightarrow P)$ and $(B, \id_B: B\longrightarrow B, 0_{A, B}: B \longrightarrow A, i_B: B \longrightarrow P)$. From the definition of 2-products, we know that there exist two 2-morphisms $(\gamma_A: p_A   i_A \Longrightarrow \id_A , \gamma_B: p_B   i_B \Longrightarrow \id_B )$. We let $\theta_{A}:=\gamma_{A}$ and $\theta_B:=\gamma_B$. To find $\theta_p$, we check the universality condition for $P$  and two 2-cones:
	\begin{enumerate}
		\item $(P, p_A   l: P \longrightarrow A, p_B  l: P \longrightarrow B, l: P \longrightarrow P), \hspace{0.5cm} l:=i_A   p_A \oplus i_B   p_B$
		\item $(P, p_A: P \longrightarrow A, p_B: P \longrightarrow B, \id_P: P \longrightarrow P)$
	\end{enumerate} 
	By Lemma \ref{lemma35}, 2-morphisms between these two cones are as below:
	\begin{align*}
		&& \Sigma_A=\begin{bmatrix}
			\gamma_A   p_A &
			0  
		\end{bmatrix}: p_A   i_A   p_A\oplus p_A   i_B   p_B \Rightarrow p_A,\\
		&& \Sigma_B=\begin{bmatrix}
			0 & \gamma_B   p_B
		\end{bmatrix}: p_B   i_A   p_A\oplus p_B   i_B   p_B \Rightarrow p_B
	\end{align*}
	Therefore, 
	$\theta_P =\begin{bmatrix}
		i_A   \Sigma_A \\
		i_B   \Sigma_B
	\end{bmatrix} = \begin{bmatrix}
		i_A   \gamma_A   p_A & 0 \\
		0 & i_B   \gamma_B   p_B 
	\end{bmatrix} $.
	\\
Considering Lemma \ref{lemma33}, also the point that $(1_{p_A   i_A} )^2=1_{p_A   i_A}$, we can show $\theta_P$ satisfies necessary conditions. 
	$$
	p_A   \theta_P   i_A=
	\begin{bmatrix}
		(p_A   i_A)   \theta_A    (p_A   i_A) & 0 \\
		0 & 0
	\end{bmatrix}
	= \begin{bmatrix}
		(p_A   i_A)   \theta_A   & 0 \\
		0 & 0
	\end{bmatrix}.$$
	3 $\Longrightarrow$ 1:
	Now if we have the algebraic definition of 2-biproduct for a pair of objects, we want to demonstrate $P$ is also a 2-product. Because we already have projections, it is enough to show it satisfies the universality condition. To this end, check the universality condition for an arbitrary cone $(X, f: X\longrightarrow A, g: X \longrightarrow B)$ and let a 1-morphism from $X$ to $P$ be $h=i_A  f \oplus i_B   g$:
	\begin{align*}
		&&\xi_A: p_A   h= p_A   (i_A  f \oplus i_B  g) \Rightarrow f, && \xi_A = \theta_A   f \odot \pi_1
	\end{align*}
	Similarly, we obtain $\xi_B = \theta_B   g \odot \pi_2$. Given another cone $(X, f': X\longrightarrow A, g': X \longrightarrow B)$, Letting $h'=i_A   f' \oplus i_{B}  g'$, we have $\xi'_A = \theta_A   f'\odot \pi_1$ and $\xi'_B = \theta_B   g'\odot \pi_2$. If there exist 2-morphisms $(\Sigma_A: f\Longrightarrow f', \Sigma_B: g\Longrightarrow g')$, we should show that there is a unique 2-morphism $\gamma: i_A  f\oplus i_B  g \Longrightarrow i_A  f'\oplus i_B  g'$ such that
	$(p_A   \gamma)=(\xi'_A)^{-1}\odot \Sigma_A \odot \xi_A$. Suppose $\gamma$ in the explicit form is $
	\gamma = \nu_1 \odot i_A \Sigma_A \odot \pi_1 + \nu_2 \odot i_B \Sigma_B \odot \pi_2
	$ and in the matrix form is as follows:
	$$ \gamma=
	\begin{bmatrix}
		i_A\Sigma_A & 0 \\
		0 & i_B \Sigma_B
	\end{bmatrix}
	$$
	Multiplying $\gamma$ by $p_A$, we have:
	\[
	p_A \gamma = \nu_1 \odot p_A i_A \Sigma_A \odot \pi_1 + \nu_2 \odot p_A i_B \Sigma_B \odot \pi_2 
	\]
	Note that $p_A   i_A   \Sigma_A = (\theta^{-1}_A   f^\prime) \odot \Sigma_A \odot (\theta_A   f) $ and $p_A i_B \cong 0_{A, B}$, so:
	\[
	p_A \gamma = \nu_1 \odot (\theta^{-1}_A   f^\prime) \odot \Sigma_A \odot (\theta_A   f) \odot \pi_1 
	\]
	Additionally, the inverse of $\xi_A = (\theta_A f) \odot \pi_1$ is $(\xi_A)^{-1} = \nu_1 \odot (\theta_A^{-1} f )$. Substitution of these terms results in the desired condition: 
	\[
	p_A \gamma = \xi_A^{-1} \odot \Sigma_A \odot \xi_A 
	\]
	\textbf{\underline{Uniqueness of $\gamma$}}: Assume that there exists another 2-morphism $\gamma': i_A  f \oplus i_B   g \Rightarrow i_A   f' \oplus i_B   g' $ such that it satisfies all necessary conditions. Using lemma \ref{lemma-34} and lemma \ref{lemma-35}, we prove $\gamma'$ is equivalent to $\gamma$:
	\begin{align*}
		& \gamma'=(\id_p)  \gamma'\overset{\text{lemma~} \ref{lemma-34}}{=}(\theta_P h')\odot [(i_A  p_A \oplus i_B  p_B) \gamma'] \odot ({\theta_P}^{-1} h) \xRightarrow[]{\text{Equation \ref{eq:product}, lemma \ref{lemma-35}}}\\
		& \gamma'= \begin{bmatrix}
			i_A   (\theta_A   f') & 0\\
			0&i_B   (\theta_B   g')
		\end{bmatrix} 
		\odot \begin{bmatrix}
			i_A   [(\theta_A^{-1}  f') \odot \Sigma_A \odot (\theta_A  f)]  &0   \\
			0&i_B   [(\theta_B^{-1} g') \odot \Sigma_B \odot (\theta_B   g)]
		\end{bmatrix} \\
		&	\odot \begin{bmatrix}
			i_A   (\theta_A^{-1}   f) & 0\\
			0& i_B   (\theta_B^{-1}   g)
		\end{bmatrix} =\begin{bmatrix}
			i_A \Sigma_A & 0\\
			0 & i_B \Sigma_B
		\end{bmatrix} = \gamma.
	\end{align*}
	In the second line, the first and third matrices are written by using Lemma \ref{lemma-35}, and the middle matrix is obtained by condition of weak 2-products Equation \ref{eq:product}. 
	
	Note that $\theta_P   h = \nu_1 \odot (i_A  \theta_A  f) \odot \pi_1 + \nu_2 \odot (i_B   \theta_B  g) \odot \pi_2$.
	If one lets $X=P$, $f=\id_A$, and $g=0_{B, P}$; this yields $h=i_A$ and $\xi_A= \theta_A   \id_A$, hence, $\theta_P   h = \theta_P   i_A$. Multiplying $\theta_P h$ with $p_A$, we obtain one of the conditions of 2-biproducts.
	$$p_A   \theta_P   i_A = \begin{bmatrix}
		(p_A   i_A)   \theta_A  & 0\\
		0 & 0
	\end{bmatrix}$$
\end{proof}
As mentioned earlier, the categorification of the canonical definition of biproducts is achievable in a rather simple way by only using the notion of weak 2-limits in 2-categories. In the following, we first state the definition obtained from the limit notion and examine the compatibility of this definition with our algebraic version. 
\begin{defi}\label{def2.2}
	In a locally semiadditive and compositionally distributive ~~2-category, a \textit{canonical 1-morphism} between a 2-coproduct of a pair of objects $A, B$, i.e. $A \sqcup B$ and a 2-product $A \times B$ is a 1-morphism which satisfies $\theta_{k, j}: p_k   r   i_j \Rightarrow \delta_{k, j} id_j$, if $\theta_{k, j}$ are 2-isomorphisms. 
	$$ A_j \xrightarrow{i_j} A_1 \sqcup A_2 \xrightarrow{r} A_1 \times A_2 \xrightarrow{p_k} A_k, \hspace{0.5cm}\text{if} \hspace{0.5cm} j, k \in \{1, 2\} $$
\end{defi}
\begin{rem}
	A canonical 1-morphism $r$ is unique up to a unique 2-isomorphism. That is, for every pair of such 1-morphisms, there exists a unique 2-isomorphism $ \gamma: r \Rightarrow r^\prime$ such that $p_k   \gamma   i_j = {(\theta^\prime_{k, j})}^{-1} \odot \theta_{k, j}$.
\end{rem}
\begin{rem}
	2-Isomorphisms $\theta_{i, j}$ for $i \neq j$ are zero 2-morphisms, according to Lemma \ref{lemma2}. 
\end{rem} 
\begin{defi}\label{def1.1}
	In a locally semiadditive and compositionally distributive ~~2-category, a pair of objects $A$ and $B$ has a \textit{weak 2-biproduct} if the canonical 1-morphism $r$ is an equivalence and satisfies the following conditions:
	\begin{align*}
		& A_1 \sqcup A_2 \xrightarrow{r} A_1 \times A_2, \hspace{1cm} & A_1 \sqcup A_2 \xleftarrow{r^\prime} A_1 \times A_2
		\\
		& \xi_{A \times B}: r r^\prime \Rightarrow id_{A \times B}, \hspace{1cm} &\xi_{A \sqcup B}: id_{A \sqcup B} \Rightarrow r^\prime r \\
		& 
		(r^\prime \xi_{A \times B})\odot (\xi_{A \sqcup B} r^\prime) = 1_{r^\prime}, & \hspace{1cm} (\xi_{A \times B} r) \odot (r \xi_{A \sqcup B}) = 1_r
	\end{align*}
\end{defi}
To show the consistency of this definition with the algebraic version, we first need to prove 1-morphism projections are weakly monic.
\begin{defi}
	In a 2-category, a 1-morphism $f:B \longrightarrow C$ is \textit{weakly monic} if for every pair of 1-morphisms $g, h: A \longrightarrow B$, if $fg$ is isomorphic to $fh$, i.e. $fg \cong fh$. Then $g$ and $h$ are isomorphic $g \cong h$. 
\end{defi}
\begin{lemma}\label{lemma4}
	In a 2-category with binary 2-products, 1-morphism projections are monomorphisms.  
\end{lemma}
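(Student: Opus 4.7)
The plan is to produce a 2-isomorphism $g \cong h$ from a hypothesis $p_A g \cong p_A h$ by lifting along the uniqueness clause of the weak 2-product's universal property. Given 1-morphisms $g, h \colon X \to A \times B$ and a 2-isomorphism $\eta \colon p_A g \Rightarrow p_A h$, I want to construct an invertible 2-morphism $\gamma \colon g \Rightarrow h$.

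First I set up the two cones on $X$. Any 1-morphism $g \colon X \to A \times B$ gives a cone $(X, p_A g, p_B g)$ for which $g$ itself is a factorizing 1-morphism with both weakening 2-isomorphisms equal to identities $\xi^g_A = 1_{p_A g}$ and $\xi^g_B = 1_{p_B g}$; the same is true for $h$. Second, I assemble the inputs for the uniqueness part of the universal property: a 2-morphism $\Sigma_A \colon p_A g \Rightarrow p_A h$, for which I take the given $\eta$, and a 2-morphism $\Sigma_B \colon p_B g \Rightarrow p_B h$, which must be extracted from the hypothesis of weak monicness (interpreted here as guaranteeing that the $B$-leg data accompanies the $A$-leg isomorphism, perhaps as a canonical 2-morphism coming along with the product structure). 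With $(\Sigma_A, \Sigma_B)$ in hand, the weak 2-product yields a unique $\gamma \colon g \Rightarrow h$ satisfying $p_A \gamma = (\xi^h_A)^{-1} \odot \eta \odot \xi^g_A = \eta$ and the analogous equation for $\Sigma_B$.

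Third, I promote $\gamma$ to a 2-isomorphism by running the same construction with the inverses $\eta^{-1}$ and $\Sigma_B^{-1}$, obtaining a candidate $\gamma' \colon h \Rightarrow g$. Then I invoke the uniqueness clause a final time, applied to the cone $(X, p_A g, p_B g)$ against itself with the trivial pair $(1_{p_A g}, 1_{p_B g})$: both $\gamma' \odot \gamma$ and $1_g$ satisfy the required compatibility, so they must coincide, and symmetrically for $\gamma \odot \gamma' = 1_h$. This gives $g \cong h$ and shows $p_A$ (and by symmetry $p_B$) is weakly monic.

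The hard part is step two, the production of $\Sigma_B$. The hypothesis as literally written supplies only an $A$-component 2-isomorphism, whereas the universal property of the 2-product demands compatible 2-morphism data on both legs before it can single out a lift $\gamma$. The argument therefore really hinges on how the statement is read: one must either interpret weak monicness of a projection so that the hypothesis already packages a $p_B$-companion to $\eta$, or invoke additional ambient structure (a zero object, or the canonical 2-morphisms arising from the product cone comparing $p_B g$ and $p_B h$ via the shared apex $X$) to supply $\Sigma_B$. Once that gap is bridged, steps one and three are routine applications of the weak 2-product's uniqueness clause and involve no further content beyond the definitions.
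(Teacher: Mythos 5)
Your proposal follows essentially the same route as the paper: both form the two cones $(X, p_A b, p_B b, b)$ and $(X, p_A b', p_B b', b')$ on the shared apex and invoke the uniqueness clause of the weak 2-product to produce $\gamma\colon b \Rightarrow b'$ with $p_A\gamma = \Sigma_A$ and $p_B\gamma = \Sigma_B$. The gap you flag in step two is genuine, but it is present in the paper's own proof as well: the paper simply writes that ``due to our assumption'' there exist 2-isomorphisms $\Sigma_A$ \emph{and} $\Sigma_B$, silently upgrading the hypothesis $p_A b \cong p_A b'$ to isomorphism data on both legs. What is actually being proved --- and what is actually used later, in the proposition on the canonical 1-morphism $r$, where isomorphisms $p_A r r' \cong p_A$ and $p_B r r' \cong p_B$ are both available --- is that the pair $(p_A, p_B)$ is \emph{jointly} weakly monic; a single projection is not monic even in the 1-categorical case (project a product onto one factor and vary the other). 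So the reading you offer, in which the hypothesis already packages a $p_B$-companion to $\eta$, is the one under which the lemma holds and the one the paper tacitly adopts. Your step three, establishing invertibility of $\gamma$ by running the construction on the inverse data to get $\gamma'$ and then cancelling $\gamma'\odot\gamma$ against $1_b$ via uniqueness applied to the identity pair, is more careful than the paper, which asserts without argument that the unique $\gamma$ is a 2-isomorphism.
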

\begin{proof}	
	To prove $p_A$ is mono, we need to show that if $p_A   b \overset{\Sigma_A}{\cong} p_A   b^\prime$ there exists a 2-isomorphism $\gamma: b \Rightarrow b^\prime$. With 1-morphisms $b$ and $b'$, we can make two 2-cones $(X, p_Ab, p_Bb, b)$ and $(X, p_Ab', p_Bb', b')$. Due to our assumption, there exists 2-isomorphisms $\Sigma_A: p_A   b \Rightarrow  p_A   b'$ and $\Sigma_B: p_B   b \Rightarrow  p_B   b'$, hence, a unique 2-isomorphism $\gamma: b \Rightarrow b^\prime$ such that $p_A   \gamma = \Sigma_A$ and $p_B   \gamma = \Sigma_B $. 
\end{proof}
\begin{figure}[h]
\centering
\begin{tikzpicture}[scale=0.85]
		\node [] (0) at (-2, -1) {$A$};
		\node [] (1) at (2, -1) {$B$};
		\node [] (2) at (0, 1) {$A \times B$};
		\node [] (3) at (0, 3) {$X$};
		\node [] (4) at (1, 1.8) {};
		\node [] (5) at (-1, 1.8) {};
		\node [] (6) at (-1.5, 1.75) {};
		\node [] (7) at (-2, 2) {};
		\node [] (8) at (1.5, 1.75) {};
		\node [] (9) at (2, 2) {};
		\node [] (10) at (-0.25, 2) {};
		\node [] (11) at (0.25, 2) {};
		\node [red] (12) at (-1, 2.5) {$p_Ab$};
		\node [blue] (13) at (-2.3, 2.5) {$p_Ab'$};
		\node [red] (14) at (1.2,  2.5) {$p_Bb$};
		\node [blue] (15) at (2.4, 2.5) {$p_Bb'$};
		\node [] (16) at (0.5, 1) {};
		\node [] (17) at (-0.5, 1) {};
		\node [] (18) at (1, 0.5) {};
		\node [] (19) at (-1, 0.5) {};
		\node [] (20) at (2.5, 1) {};
		\node [] (21) at (-2.5, 1) {};
		
		\draw [->, red, dashed]  (3)  to [ bend right=20]  (0);
		\draw [->, blue]  (3)  to [bend right=70] node[left] {} (0);
		\draw [->] (2) to [bend right=20]  node[below right] {$p_A$} (0);
		
		\draw [->, red, dashed]  (3)  to [bend left=20] (1);
		\draw [->, blue]  (3)  to [bend left=70] (1);
		\draw [->] (2) to [bend left=20] node[below left] {$p_B$}(1);
		
		\draw [blue, ->] (3) to [bend left=30] node[right] {$b'$}(2);
		\draw[red, ->, dashed] (3) to [bend right=30] node[left] {$ b$}(2); 
		
		\draw[double,thin,-> ](6) to node[below] {$\Sigma_A$} (7);
		\draw[double,thin,->] (8) to node[below] {$\Sigma_B$}(9);
		\draw[double,thin,->] (10) to node[above] {$ \gamma$}(11);
	\end{tikzpicture} 
\caption{Product in 2-categories.}
\end{figure}
\begin{proposition}
	In a locally semiadditive and compositionally distributive 2-category, a canonical 1-morphism $r$, between a 2-coproduct $A\sqcup B$ and a 2-product $A \times B$ of a pair of objects is an equivalence if and only if projections and injections satisfy the conditions of Definition ~\ref{definition-bipro}. 
\end{proposition}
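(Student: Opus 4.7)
The plan is to reduce both directions to Theorem \ref{theorem-main}, which already identifies the algebraic 2-biproduct conditions with the existence of a weak 2-product (equivalently, 2-coproduct) of the same pair of objects.

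For the $(\Leftarrow)$ direction, assume the tuple $(P, p_A, p_B, i_A, i_B, \theta_A, \theta_B, \theta_{AB}, \theta_{BA}, \theta_P)$ satisfies Definition \ref{definition-bipro}. By Theorem \ref{theorem-main}, $P$ equipped with $(p_A,p_B)$ is a weak 2-product of $(A,B)$, and simultaneously a weak 2-coproduct with $(i_A,i_B)$. Because weak 2-limits are unique up to equivalence, any given 2-coproduct $A\sqcup B$ and 2-product $A\times B$ are both equivalent to $P$; composing those equivalences through $P$ yields an equivalence $A\sqcup B \to A\times B$ that can easily be checked to satisfy the property of Definition \ref{def2.2}. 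Uniqueness up to unique 2-isomorphism of canonical 1-morphisms then identifies this equivalence with $r$, and the coherent triangle identities of Definition \ref{def1.1} follow from the same uniqueness.

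For the $(\Rightarrow)$ direction, suppose $r:A\sqcup B \to A\times B$ is an equivalence with quasi-inverse $r'$. I take $A\boxplus B := A\sqcup B$ with the existing injections $i_A, i_B$, and define new projections $\tilde p_A := p_A r$ and $\tilde p_B := p_B r$. The 2-isomorphisms $\theta_A:\tilde p_A i_A \Rightarrow id_A$, $\theta_B$, $\theta_{AB}:\tilde p_A i_B \Rightarrow 0_{A,B}$, and $\theta_{BA}$ are precisely the canonical $\theta_{k,j}$ supplied by Definition \ref{def2.2}. To build $\theta_P : i_A\tilde p_A \oplus i_B\tilde p_B \Rightarrow id_{A\sqcup B}$, I appeal to the universal property of the coproduct: using Proposition \ref{pro-dist}, the $\theta$'s above, and the fact that composing with a zero 1-morphism yields a zero (the proposition immediately preceding Definition \ref{definition-bipro}), one verifies $(i_A\tilde p_A \oplus i_B\tilde p_B)\circ i_A \cong i_A$ and symmetrically for $i_B$; since $id_{A\sqcup B}$ has the same universal behaviour, the coproduct produces the desired 2-isomorphism $\theta_P$.

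The main obstacle is then the matrix compatibility condition \ref{conditions-of-2-biproducts} for $\tilde p_A\,\theta_P\, i_A$. This step essentially mirrors the ``$1\Longrightarrow 3$'' pasting computation in Theorem \ref{theorem-main}, but with injections transported along $r$: Lemma \ref{lemma-35} furnishes the matrix form of $\theta_P$ applied to $i_A$, Lemma \ref{lemma3} identifies the off-diagonal 2-identities arising from $\tilde p_B i_A$ and $\tilde p_A i_B$ as zero 2-morphisms, and Lemma \ref{lemma33} re-expresses $(\tilde p_A i_A)\theta_A$ to match the claimed entry. Because compositional distributivity lets $r$ slide past biproducts in hom-categories, the bookkeeping in Theorem \ref{theorem-main} carries over essentially verbatim, and the off-diagonal entries collapse to zero by Lemma \ref{lemma3}.
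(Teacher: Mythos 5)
Your proposal is correct in outline and shares the paper's skeleton (reduce everything to Theorem \ref{theorem-main}, transport structure along $r$, and verify the matrix condition \ref{conditions-of-2-biproducts} via Lemmas \ref{lemma-35}, \ref{lemma3} and \ref{lemma33}), but it diverges from the paper's proof in two ways worth noting. First, for the direction ``algebraic conditions imply $r$ is an equivalence,'' the paper does not invoke uniqueness of weak 2-(co)products up to equivalence: it exhibits the quasi-inverse explicitly as $r' = i_A \, p_A \oplus i_B \, p_B$, computes $p_A \, r \, r'$ down to $p_A$ using the $\theta_{\alpha,\beta}$, and then uses Lemma \ref{lemma4} (projections are weakly monic) to promote this to a 2-isomorphism $r r' \Rightarrow id_{A\times B}$, with the dual argument via epicity of $i_A$ for $r'r$. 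Your route is shorter but leans on a uniqueness-up-to-equivalence principle for these weak 2-limits that the paper never establishes; you would either need to prove it in this setting or fall back on the explicit inverse, which is the real content here. Second, for the converse you transport the projections to $A \sqcup B$ (setting $\tilde p_A = p_A r$), whereas the paper transports the injections to $A \times B$ (setting $i'_A = r i_A$) and builds $\theta_P$ from the universal property of the 2-product together with monicity of $p_A$; these are mirror images, but be aware that your version requires distributivity of \emph{pre}-composition over biproducts of 1-morphisms, $(u \oplus v)\, w \cong u\,w \oplus v\,w$, while Proposition \ref{pro-dist} only proves the post-composition form $f\,(g \oplus h) \cong f\,g \oplus f\,h$; the dual statement holds by a mirrored argument but should be stated explicitly. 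With those two points patched, your argument goes through and buys a slightly more conceptual first half at the cost of an extra uniqueness lemma; the paper's version is more computational but self-contained.
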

\begin{proof}
	Suppose we have the algebraic definition of 2-biproducts, that is $\theta_{\alpha, \beta}: p_\alpha   i_\beta \Rightarrow \delta_{\alpha, \beta} id_\alpha$ and $\theta_p: \oplus_\alpha i_\alpha   p_\alpha \Rightarrow id_{A \boxplus B}$. If $\theta_{A, A} = \theta_A$ and $\theta_{B, B}=\theta_B$. To prove a canonical 1-morphism $r$, which satisfies $\xi_{\alpha, \beta}: p_\alpha   r   i_\beta \Rightarrow \delta_{\alpha, \beta}$, has an inverse, we first need to show that $A \boxplus B$ is indeed a 2-product and a 2-coproduct, i.e. it satisfies the universal property, which is exactly the first part of Theorem \ref{theorem-main}.  Therefore, it remains to show that $r$ is an equivalence. Let the inverse be $r^\prime = i_A   p_A \oplus i_B   p_B$. Using $\theta_{\alpha, \beta}: p_\alpha   i_\beta \Rightarrow \delta_{\alpha, \beta} id_\alpha$, we have:
	$$	p_A   r   r^{\prime}= p_A   r   (i_A   p_A \oplus i_B   p_B) \xRightarrow{\lambda} p_A \oplus 0 \xRightarrow{\pi_1} p_A
	$$
	which shows $\lambda = \begin{bmatrix}
		\xi_{A, A} & 0 \\
		0 & \xi_{A, B}
	\end{bmatrix}$. Since $p_A$ is monic, there exists a 2-isomorphism $\xi_{A \times B}: r   r^{\prime} \Rightarrow id_{A \times B}$ such that $p_A   \xi_{A \times B} = \pi_1  \odot \lambda$. One can also show that $r^\prime   r \Rightarrow id_{A \sqcup B}$ by checking $r^\prime   r   i_A$ and using epiciticy of $i_A$. 
	
	$\Leftarrow$ Now given $r$ and $r'$ which satisfy $\xi_{\alpha, \beta}: p_\alpha   r   i_\beta \Rightarrow \delta_{\alpha, \beta}$, $\xi_{A \sqcup B}: r^\prime   r \Rightarrow id_{A \sqcup B}$ and $\xi_{A \times B}: r   r^\prime \Rightarrow id_{A \times B}$, to obtain necessary equations and the condition for 2-biproducts, we definition new injections $i^\prime_A = r   i_A $ and claim the 2-product of two objects with these injections $(A \times B, i^\prime_A, i^\prime_B)$ is a 2-coproduct .i.e. it satisfies the universal property.
\begin{center}
		\begin{tikzpicture}[scale=0.8]
			\node(0) at (0, 0) {$A$};
			\node (1) at (4, 0) {$B$};
			\node (2) at (2, 1.5) {$A \sqcup B$};
			\node (3) at (2, 3) {$A \times B$};
			\node (4) at (2, 4.5) {$X$}; 
			
			\draw[->] (0) to node[above left]{$i_A$} (2);
			\draw[->] (1) to node[above right]{$i_B$} (2);
			\draw[->] (2) to node[right]{$r$}(3);
			\draw[->] (3) to node[right]{$c$}(4);
			\draw[->, bend left=45] (0) to node[left]{$f$}(4);
			\draw[->, bend right=45] (1) to node[right]{$g$}(4);
			\draw[->,  bend left=40] (2) to node[left]{$b$}(4);
		\end{tikzpicture} 
\end{center} 
Because $(A \sqcup B, i_A, i_B)$ is a 2-coproduct, there exists a 1-morphism $b$ such that $\eta_A: b   i_A \Rightarrow f$ and $\eta_B: b   i_B \Rightarrow g$. Let $c= b   r^\prime$, therefore, for $f$ and similarly for $g$: 
	$$
	c i^\prime_A = b r^\prime r i_A \xRightarrow{b\xi_{A \sqcup B}i_A } b~ id_{A \sqcup B}~i_A = bi_A \xRightarrow{\eta_A} f 
	$$
	Moreover, $(A \times B, i^\prime_\alpha,  p_\alpha , \alpha=A, B)$ is a 2-biproduct of $A$ and $B$. That is, we need to find 2-isomorphisms $\theta_{\alpha, \beta}: p_\alpha   i^\prime_\beta \Rightarrow \delta_{\alpha, \beta} id_\alpha$ and $\theta_{p}: i_A^\prime   p_A \oplus i_B^\prime   p_B \Rightarrow id_{A \times B}$ which satisfy appropriate conditions. By condition on $r$, we let 2-isomorphisms $\theta_{\alpha, \beta}:= \xi_{\alpha, \beta}$, 
	Finally, $\theta_{p}: i_A^\prime   p_A \oplus i_B^\prime   p_B \Rightarrow id_{A \times B}$ is the result of the universal property of 2-products and moniticity of projections.
	\begin{align*}
		&&p_A  [i_A^\prime   p_A \oplus i_B^\prime   p_B] = p_A    (r   i_A)   p_A \oplus p_A   (r   i_B)   p_B \xRightarrow{\rho} p_A \oplus 0  \\
		&& p_B  [i_A^\prime   p_A \oplus i_B^\prime   p_B] = p_B    (r   i_A)   p_A \oplus p_B   (r   i_B)   p_B \xRightarrow{\rho^\prime} 0 \oplus p_B 
	\end{align*}	such that $\rho = \begin{bmatrix}
		\xi_{A, A}   p_A& 0 \\
		0 & \xi_{A, B}   p_B
	\end{bmatrix}$ and $\rho^\prime= \begin{bmatrix}
		\xi_{A, B}  p_A & 0 \\
		0 & \xi_{B, B}  p_B
	\end{bmatrix}$. Note that $p_A \theta_P = \rho$ and $p_B \theta_P = \rho'$. So we can check the condition on $\theta_p$, by \\ $p_A   \theta_p   i_A = \begin{bmatrix}
		\xi_{A, A}   p_A   i_A& 0 \\
		0 & \xi_{A, B}   p_B   i_A
	\end{bmatrix}$. 
\end{proof}
So far we have established a definition for semiadditive 2-categories. In next section, we show how we type tensors in this 2-category. 
\subsection{Typing rank four tensors}
We have used matrix notation through this chapter to make the difficult calculations easier and to gain an intuition about the difference between biproducts in categories
and 2-categories. For instance, we have represented the condition on 2-biproducts \ref{conditions-of-2-biproducts} as
$2 \times 2$ matrices. But the conditions on 2-Morphism $\theta_P$ become matrix after applying projections and injections, this observation clearly displays the type of 2-morphisms in semiadditive 2-categories is different from the type of 1-morphisms.

Tensor is an umbrella term for mathematical objects including scalars, vectors, matrices, and so on. Each tensor has a rank specified by their indices, for example, scalars are rank zero tensors, vectors rank one, matrices are rank two tensor, and this list can go on. In linear algebra, they are represented as below, when $e_{\alpha_j}$ is the $j$th basis vector:
\begin{equation}
T = \sum_{\alpha_1, \alpha_2, \alpha_3, ...} t_{\alpha_1, \alpha_2, ...} (e_{\alpha_1} \otimes e_{\alpha_2} \otimes ...)
\end{equation}
Hence, in this notation, 
\begin{align*}
	&\text{Scalar: } n=t & \text{Vector:  } v = \sum_{\alpha_1} t_{\alpha_1} e_{\alpha_1} \\
	&\text{Matrix:  } M = \sum_{\alpha_1, \alpha_2} t_{\alpha_1, \alpha_2} (e_{\alpha_1} \otimes e_{\alpha_2})
\end{align*}
Pay attention to entries of tensors above. Entries of vectors are scalar. If we ﬁx only
one basis, then entries of a matrix become vectors. If we ﬁx both bases of a matrix,
then entries are scalar. Similarly if you fix an index of a rank n tensor, you obtain a rank n-1 tensor, and so force.
\subsection{Category of \vect for typing tensors}
Similar to Section \ref{1-d} that we carried the guiding example of the category of matrices, \mat, 2-Category \vect  introduced by Kapranov and Voevodsky \cite{kapranov19942} inspires us throughout this section.

Kapranov and Voevodsky in their seminal work, introduced
\vect in Definition 5.2, as an example of symmetric monoidal 2-categories. Objects of \vect are natural numbers,
1-morphisms are matrices between natural numbers whose entries are ﬁnite dimensional vector spaces, and 2-morphisms are linear maps between vector spaces. They, however, did not spell out the details and left the rest to the readers. We shall do that here. 

From \vect  perspective, one can observe some properties of semiadditive 2-categories. For instance, in \vect, there are two collections of bases: global bases which are 1-morphism projections and injections $\{p, i\}$ indexed by objects(natural numbers) and local bases, which are the internal bases of each vector space $\{\pi, \nu\}$. Correspondingly, a semiadditive 2-category has a collection of global bases, and for each Hom-category a collection of local bases, which are 2-morphisms projections and injections indexed by 1-morphisms. Global bases are projections $p$ and injections $i$ of 2-biproducts $\boxplus$, and local bases are projections $\pi$ and injections $\nu$ of biproducts $\oplus$ in Hom-categories. Generally, a 2-morphism is a tensor of rank up to four, or in other words, it is a 4-dimensional matrix. Thus, each entry of a matrix has four indices: Latin indices are global and Greek indices are local indices. 
\begin{equation}\label{formula1}
	\theta = \sum_{\alpha, \beta} \sum_{k, l} (\theta_j^k)_\alpha^\beta [(\nu_\alpha \otimes \pi_\beta )\otimes (i_j \otimes p_k )]
\end{equation}
Applying the global projections and injections, one selects an entry which itself is a matrix with entries indexed by
local bases. In a 4-dimensional picture, by applying a 1-morphism projection, one picks a 3-dimensional matrix, a cube. Applying a 1-morphism injection to that cube, we select a square matrix. To pick an entry from this square matrix, one has to apply 2-morphism projections and injections, Figure \ref{figure-global-local-basis}.
\begin{figure}[!h]\label{figure-global-local-basis}
\centering
	\begin{tikzpicture}
		\node [] (0) at (-2, 0) {};
		\node [] (1) at (0, 0) {};
		\node [] (2) at (1, 1) {};
		\node [] (3) at (-1, 1) {};
        \node [] (4) at (0, 2) {};
        \node [] (5) at (1, 3) {};
        \node [] (6) at (-1, 3) {};
        \node [] (7) at (-2, 2) {};
	
		\draw[-] (0) to (1); 
        \draw[-] (1) to (2); 
        \draw[-] (0) to (3); 
        \draw[-] (3) to (6); 
        \draw[-] (3) to (2); 
        \draw[-] (7) to (4); 
        \draw[-] (7) to (0);
        \draw[-] (7) to (6); 
        \draw[-] (1) to (4); 
        \draw[-] (2) to (5); 
        \draw[-] (6) to (5); 
        \draw[-] (4) to (5); 
        
		\node [] (8) at (1, 4) {};
		\node [] (9) at (3, 4) {};
		\node [] (10) at (4, 5) {};
		\node [] (11) at (2, 5) {};
        \node [] (12) at (3, 6) {};
        \node [] (13) at (4, 7) {};
        \node [] (14) at (2, 7) {};
        \node [] (15) at (1, 6) {};
	
		\draw[-] (8) to (9); 
        \draw[-] (9) to (10); 
        \draw[-] (8) to (11); 
        \draw[-] (11) to (14); 
        \draw[-] (11) to (10); 
        \draw[-] (15) to (12); 
        \draw[-] (15) to (8);
        \draw[-] (15) to (14); 
        \draw[-] (9) to (12); 
        \draw[-] (10) to (13); 
        \draw[-] (14) to (13); 
        \draw[-] (12) to (13);

		\draw[-, dashed, red] (0) to (8); 
        \draw[-, dashed, red] (1) to node[below]{$~i_j$}(9) ; 
        \draw[-, dashed, red] (2) to (10); 
        \draw[-, dashed, red] (3) to (11); 
        \draw[-, dashed, red] (4) to (12); 
        \draw[-, dashed, red] (5) to (13); 
        \draw[-, dashed, red] (6) to node[above]{$~i_j$} (14); 
        \draw[-, dashed, red] (7) to (15); 

        \node[] (17) at (3, 2) {$\xRightarrow{\text{Injection}}$};

        \node [] (18) at (4, 1) {};
		\node [] (19) at (6, 1) {};
		\node [] (20) at (7, 2) {};
		\node [] (21) at (5, 2) {};
        \node [] (22) at (6, 3) {};
        \node [] (23) at (7, 4) {};
        \node [] (24) at (5, 4) {};
        \node [] (25) at (4, 3) {};
	
		\draw[-] (18) to (19); 
        \draw[-] (19) to (20); 
        \draw[-] (19) to (22); 
        \draw[-] (18) to (25); 
        \draw[-] (18) to (21); 
        \draw[-] (20) to (23); 
        \draw[-] (20) to (21);
        \draw[-] (22) to (23); 
        \draw[-] (22) to (25); 
        \draw[-] (24) to (25); 
        \draw[-] (24) to (23); 
        \draw[-] (24) to (21); 
        \draw[->, blue, dashed] (25) to node[below]{$~p_k$} (24);

        \node[] (26) at (8, 2) {$\xRightarrow{\text{Projection}}$};

        \node [] (27) at (9, 1) {};
		\node [] (28) at (11, 1) {};
        \node [] (29) at (11, 3) {};
        \node [] (30) at (9, 3) {};

		\draw[-] (27) to (28); 
        \draw[-] (27) to (30); 
        \draw[-] (28) to (29); 
        \draw[-] (30) to (29); 

        \node[rotate=90, blue] (31) at (10, 1.5) {$\xRightarrow{\pi_\beta}$};
        \node[red] (31) at (9.3, 1.9) {$\xRightarrow{\nu_{\alpha}}$};
        \node[] (32) at (10, 2) {$({\theta_j^k})_\beta^\alpha$};
        
	\end{tikzpicture}
    \caption{Starting from a four dimensional tensor, from left to right, first apply 1-morphism injection, $i_j$ to obtain a three dimensional tensor, then apply 1-morphism projection, $p_k$ to obtain a matrix, applying 2-morphisms injection and projection, $(\pi_\beta, \nu_\alpha)$ we obtain a corresponding entry of the tensor, $({\theta_j^k})_\beta^\alpha$}
\end{figure}

To unpack Equation \ref{formula1} , consider a 2-morphism $\theta: f \Rightarrow g$ and 
\begin{equation*}
	f, g: \boxplus_{j=0}^n A_j \longrightarrow \boxplus_{k=0}^m A_k
\end{equation*}
Explicitly, $f, g$ have the following expressions:
\begin{align}
f = \bigoplus_{k=0}^m \bigoplus_{j=0}^n f_j^k(i_j \otimes p_k), && g = \bigoplus_{k=0}^m \bigoplus_{j=0}^n g_j^k(i_j \otimes p_k)
\end{align}
\begin{equation}
	\begin{tikzpicture}[scale=0.4]
		\node[] (0) at (0, 0) {$A_1 \boxplus A_2 \boxplus \dots \boxplus A_n $};
		\node[] (1) at (12, 0) {$A_1 \boxplus A_2 \boxplus \dots \boxplus A_m $};
		\node[] (2) at (6, 0.7) {};
		\node[] (3) at (6, -0.8) {};
		\draw[->, bend right=45] (0) to node[below]{$g$} (1);
		\draw[->, bend left=45] (0) to node[above]{$f$} (1);
		\draw[->, double] (2) to node[right]{~$\theta$} (3);
	\end{tikzpicture}
\end{equation}
Note that the entries of $f$ and $g$ are in the same Hom-categories as $f$ and $g$, that is,
$f_{i}^k, g_j^k \in Hom(\boxplus A_j, \boxplus A_k)$. We intentionally use $\oplus$ instead of $\sum$ to emphasis what we are summing over are 1-morphisms, so the correct sum is actually biproduct. Now if entries of $f$ and $g$ are a sum of 1-morphisms,
\begin{align}
	f_j^k = \bigoplus_{\alpha=0}^{p} h_\alpha && g_j^k = \bigoplus_{\beta=0}^{z} h_\beta 
\end{align}
The unpacked form of a 2-morphism $\theta_{j}^k : f_j^k \Rightarrow g_j^k $ is as follows:
\begin{equation}
	\theta_j^k = \sum_{\alpha=0}^{p} \sum_{\beta=0}^{z} (\theta_j^k)_\alpha^\beta(\nu_\alpha \otimes \pi_\beta)
\end{equation}
For the horizontal composition of 1-morphisms in matrix notation, we first calculate the usual multiplication of matrices, then we compose the corresponding entries of matrices. In the example above, consider a 1-morphism $r: \boxplus_{k=0}^{m} A_k \longrightarrow \boxplus_{l=0}^{w} A_l$, the composition of $r$ with $f$ is:
\begin{align*}
&f = \bigoplus_{k=0}^{m}\bigoplus_{j=0}^{n}f_j^k(i_j \otimes p_k), \hspace*{2cm} r = \bigoplus_{l=0}^{w}\bigoplus_{k'=0}^{m}r_{k'}^l(i_{k'} \otimes p_l)\\
&r\circ f = \bigoplus_{k=0}^{m} \bigoplus_{j=0}^{n} \bigoplus_{l=0}^{w} \bigoplus_{k'=0}^{m} [(f_j^k \circ r_{k'}^l)][(i_j \otimes p_k)\otimes (i_{k'}\otimes p_l)] \xRightarrow[]{p_k \otimes i_{k'} = \delta_{k, k'}}&\\
& r \circ f = \bigoplus_{j=0}^{n} \bigoplus_{l=0}^{w} \bigoplus_{k, k'=0}^{m} [(f_j^k \circ r_{k'}^l)][\delta_{kk'}(i_j \otimes p_l)] = \bigoplus_{j=0}^{n} \bigoplus_{l=0}^{w} \bigoplus_{k=0}^{m}(f_j^k \circ r_k^l)(i_j\otimes p_l)
\end{align*}
The reader should not be surprized by the tensor sign between projection and injection
$p_k \otimes i_{k^\prime}$. This convention agrees with our knowledge of tensors calculus in algebra.
The horizontal composition of 2-morphisms is similar to 1-morphisms composition.
\begin{align*}
\theta\circ \xi = \bigoplus_{j, k, m} (\theta_j^k \otimes \xi_k^m)(i_j \otimes p_m)
\end{align*}
The vertical composition is obtained first by  Hadamard product or entry-wise
matrix multiplication, and then by usual matrix multiplication between corresponding
terms. Consider 2-morphisms $f \xRightarrow{\theta} g \xRightarrow{\xi}q$, note that $f, g, q$ all have the same source
and target; 
\begin{equation*}
\centering
	\begin{tikzpicture}[scale=0.4]
		\node[] (0) at (0, 0) {$A$};
		\node[] (1) at (12, 0) {$B $};
		\draw[->, bend right=25] (0) to node[below]{$f$} (1);
		\draw[->] (0) to node[above]{$h$} (1);
        \draw[->, bend left=25] (0) to node[above]{$g$} (1);
	\end{tikzpicture}
\end{equation*}
so the same orders, i.e. order = $m\times n$. To do Hadamard product, we need to apply 1-morphism projections and injections $p_k \theta i_j = \theta_j^k$ and $p_k \xi i_j = \xi_j^k$. Then the vertical composition of $\theta$ and $\xi$ is obtained by matrix multiplication of corresponding entries. The $\sum$ sign here is the addition of 2-morphism.
\begin{align*}
	& \theta_j^k \odot \xi_j^k = \sum_{\alpha'=0}^x\sum_{\beta'=0}^z \sum_{\alpha=0}^p\sum_{\beta=0}^z  (\theta_j^k)_\alpha^\beta  (\xi_j^k)_{\beta'}^{\alpha'} [(\nu_\alpha \otimes \pi_\beta) \otimes (\nu_{\beta'} \otimes \pi_{\alpha'})] \xRightarrow{\pi_\beta \otimes \nu_{\beta'} = \delta_{\beta, \beta'}}\\
	& \theta_j^k \odot \xi_j^k = \sum_{\alpha'=0}^x \sum_{\alpha=0}^p\sum_{\beta=0}^z  (\theta_j^k)_\alpha^\beta  (\xi_j^k)_{\beta}^{\alpha'} (\nu_\alpha \otimes  \pi_{\alpha'}) 
\end{align*}
We summarize the content of this section in an example in \vect. Consider 1- and 2-morphisms in the following figure:
\begin{center}
	\begin{tikzpicture}
		\node (0) at (0, 0) {$A_1 \boxplus A_2 \boxplus A_3$};
		\node (1) at (5, 0) {$A'_1 \boxplus A'_2$};
		\node (2) at (10, 0){$A''$};
		\node[rotate=-90] (3) at (2.2, 0.5){$\Rightarrow$};
		\node (4) at (1.9, 0.5){$\theta$};
		\node[rotate=-90] (5) at (2.2, -0.5){$\Rightarrow$};
		\node (6) at (1.9, -0.5){$\eta$};
		\node[rotate=-90] (7) at (7.5, 0){$\Rightarrow$};
		\node (8) at (7.3, 0){$\xi$};  
		\draw[->, bend left=30] (0) to node[above]{$f$} (1);
		\draw[->] (0) to node[above]{$g$} (1);
		\draw[->, bend right = 30] (0) to node[below]{$l$} (1);
		\draw[->, bend left = 30] (1) to node[above]{$h$}(2);
		\draw[->, bend right=30] (1) to node[below]{$k$} (2);
	\end{tikzpicture}
\end{center}
We write 1-morphisms as matrices of orders 2 × 3 and 1 × 2. Orders of 1-morphisms
are specified by the number of objects in the target times the number of objects in
the source.
\begin{align*}
	&f=\begin{bmatrix}
		f_{11} & f_{12} & f_{13} \\
		f_{21} & f_{22} & f_{23}
	\end{bmatrix}, 
	g=\begin{bmatrix}
		g_{11} & g_{12} & g_{13} \\
		g_{21} & g_{22} & g_{23}
	\end{bmatrix}, 
	l=\begin{bmatrix}
		l_{11} & l_{12} & l_{13} \\
		l_{21} & l_{22} & l_{23}
	\end{bmatrix}, 
	&h = \begin{bmatrix}
		h_{11} & h_{12}
	\end{bmatrix}, 
	k = \begin{bmatrix}
		k_{11} & k_{12}
	\end{bmatrix} 
\end{align*}
Entries of 2-morphisms are 2-morphisms between 1-morphisms $\theta_{ij}: f_{ij} \Rightarrow g_{ij}$ written in the following. So the order of $\theta$ is the same as $f$ and $g$, but orders of entries are
determined by the number of components in entries of $f$ and $g$.
\begin{align*}
	&\theta=\begin{bmatrix}
		\theta_{11} & \theta_{12} & \theta_{13} \\
		\theta_{21} & \theta_{22} & \theta_{23}
	\end{bmatrix}, 
	\eta=\begin{bmatrix}
		\eta_{11} & \eta_{12} & \eta_{13} \\
		\eta_{21} & \eta_{22} & \eta_{23}
	\end{bmatrix},
	&\xi = \begin{bmatrix}
		\xi_{11} & \xi_{12}
	\end{bmatrix}, 
\end{align*}
Assume that some of the entries of 1-morphisms are biproducts of 1-morphisms in the same Hom-category, and other entries do not have any further components:
\begin{align*}
	& f_{11} = f_{11}^1 \oplus f_{11}^2, && g_{12} = g_{12}^1 \oplus g_{12}^2 \oplus g_{12}^3, && l_{22}=l_{22}^1 \oplus l_{22}^2, \\
	& h_{11} = h_{11}^1 \oplus h_{11}^2 && k_{11}=k_{11}^1 \oplus k_{11}^2
\end{align*}
Thus, the entries of 2-morphisms have the explicit matrix form as follows:
\begin{align*}
	& \theta_{11} = \begin{bmatrix}
		\theta_{11}^1 & \theta_{11}^2
	\end{bmatrix}, \theta_{12} = \begin{bmatrix}
		\theta_{12}^1 \\ \theta_{12}^2 \\ \theta_{12}^3
	\end{bmatrix}, \xi_{11} = \begin{bmatrix}
		\xi_{11}^1 & \xi_{11}^2 \\
		\xi_{11}^3 & \xi_{11}^4 
	\end{bmatrix}, \eta_{22} = \begin{bmatrix}
		\eta_{22}^1 \\ \eta_{22}^2
	\end{bmatrix}, 
	& \eta_{12} = \begin{bmatrix}
		\eta_{12}^1 & \eta_{12}^2 & \eta_{12}^3
	\end{bmatrix}
\end{align*}
\begin{itemize}
	\item \textit{Composition of 1-morphisms}
	\begin{align*}
		h \circ f = \begin{bmatrix}
			h_{11}f_{11} \oplus h_{12}f_{21} & h_{11}f_{12} \oplus h_{12}f_{22} & h_{11}f_{13} \oplus h_{12}f_{23}
		\end{bmatrix}
	\end{align*}
	\item \textit{Vertical composition of 2-morphisms} 
	\begin{align*}
		& \eta \odot \theta = \begin{bmatrix}
			\eta_{11} & \eta_{12} & \eta_{13} \\
			\eta_{21} & \eta_{22} & \eta_{23}
		\end{bmatrix} \odot \begin{bmatrix}
			\theta_{11} & \theta_{12} & \theta_{13} \\
			\theta_{21} & \theta_{22} & \theta_{23}
		\end{bmatrix} = \begin{bmatrix}
			\eta_{11} . \theta_{11} & \eta_{12} . \theta_{12} & \eta_{13}.\theta_{13} \\
			\eta_{21} . \theta_{21} & \eta_{22}.\theta_{22} & \eta_{23}.\theta_{23}
		\end{bmatrix} \\
		& = \begin{bmatrix}
			\begin{bmatrix}
				\eta_{11} \theta_{11}^1 & \eta_{11} \theta_{11}^2
			\end{bmatrix} & \eta_{12}^1 \theta_{12}^1 + \eta_{12}^2 \theta_{12}^2 + \eta_{12}^3 \theta_{12}^3& \eta_{13} \theta_{13}\\
			\eta_{21}\theta_{21} & \begin{bmatrix}
				\eta_{22}^1 \theta_{22} \\ \eta_{22}^2 \theta_{22}
			\end{bmatrix} & \eta_{23} \theta_{23}
		\end{bmatrix}
	\end{align*}
	\item \textit{Horizontal composition of 2-morphisms}
	\begin{align*}
		&\xi \circ \theta = \begin{bmatrix}
			\xi_{11} & \xi_{12}
		\end{bmatrix} \circ \begin{bmatrix}
			\theta_{11} & \theta_{12} & \theta_{13} \\
			\theta_{21} & \theta_{22} & \theta_{23}
		\end{bmatrix} \\
		&= \begin{bmatrix}
			\xi_{11} \otimes \theta_{11} \oplus \xi_{12} \otimes \theta_{21} &
			\xi_{11} \otimes \theta_{12} \oplus \xi_{12} \otimes \theta_{22} &
			\xi_{11} \otimes \theta_{13} \oplus \xi_{12} \otimes \theta_{23}
		\end{bmatrix} 
		&= \begin{bmatrix} \alpha
			&
			\beta
			&
			\gamma
		\end{bmatrix}
	\end{align*}
	\begin{align*}
		& \alpha = \begin{bmatrix}
			\begin{bmatrix}
				\xi_{11}^1 \begin{bmatrix}
					\theta_{11}^1 & \theta_{11}^2
				\end{bmatrix} & \xi_{11}^2 \begin{bmatrix}
					\theta_{11}^1 & \theta_{11}^2
				\end{bmatrix}\\
				\xi_{11}^3 \begin{bmatrix}
					\theta_{11}^1 & \theta_{11}^2
				\end{bmatrix} & \xi_{11}^4 \begin{bmatrix}
					\theta_{11}^1 & \theta_{11}^2
				\end{bmatrix}
			\end{bmatrix}
			\\ & \xi_{12} \otimes \theta_{21} 
		\end{bmatrix}, \\
		&\beta = \begin{bmatrix}
			\begin{bmatrix}
				\xi_{11}^1  \begin{bmatrix}
					\theta_{12}^1 \\ \theta_{12}^2 \\ \theta_{12}^3
				\end{bmatrix} & \xi_{11}^2  \begin{bmatrix}
					\theta_{12}^1 \\ \theta_{12}^2 \\ \theta_{12}^3
				\end{bmatrix}\\
				\xi_{11}^3  \begin{bmatrix}
					\theta_{12}^1 \\ \theta_{12}^2 \\ \theta_{12}^3
				\end{bmatrix} & \xi_{11}^4  \begin{bmatrix}
					\theta_{12}^1 \\ \theta_{12}^2 \\ \theta_{12}^3
				\end{bmatrix}
			\end{bmatrix} 
			\\ & \xi_{12} \otimes \theta_{22}
		\end{bmatrix} \\
		& \gamma = \begin{bmatrix}
			\begin{bmatrix}
				\xi_{11}^1  \theta_{13} & \xi_{11}^2  \theta_{13}\\
				\xi_{11}^3  \theta_{13} & \xi_{11}^4  \theta_{13}
			\end{bmatrix}\\
			&
			\\ & \xi_{12} \otimes \theta_{23}
		\end{bmatrix}
	\end{align*}
\end{itemize}
\subsection{Vectorization in 2-categories}
Analogous to the categorical case described in Section~\ref{section:vectorization1}, in this section, we intend to define a 2-categorical version of vectorization. In the 2-categorical case, we vectorize both 1-morphisms and 2-morphisms. An equation similar to Equation~\ref{eq:curry}, is defined in each Hom-category with similar properties. Let us observe the details of the vectorization procedure in both 1- and 2-morphisms with an example in the following. 

Consider two 1-morphisms; $f, f': 2 \longrightarrow 3$ with a 2-morphism between them, $\theta: f\Rightarrow f'$. Vectorization assigns two vectors, $v$ and $v'$  to 1-morphisms $f$ and $f'$ respectively. Figure~\ref{eq:currying1} also shows how this procedure assigns a vector $\alpha$ to 2-morphism $\theta$ between them. 
\begin{equation}\label{eq:currying1}
\begin{tikzpicture}
    \node(0) at (0, 2) {$3\times 2$}; 
    \node (1) at (0, 0) {$1$};
    \node (3) at (9, 0) {$3 \times 1$}; 
    \node (4) at (5, 2) {$3 \times (3 \times 2)$};
    \node (5) at (12, 2) {$2$}; 

    \draw[->, bend right=60] (1) to node[right]{$v'$}(0);
    \draw[->, bend left=60] (1) to node[left]{$v$}(0);
    \node (6) at (0, 1) {$\xRightarrow{\alpha}$}; 

    \draw[->, bend right=30] (3) to node[right]{$f$}(5);
    \draw[->, bend left=30] (3) to node[left]{$f'$}(5);
    \node[rotate=120] (7) at (10.5, 1) {$\xRightarrow{\theta}$};

    \draw[->, bend right=20] (3) to node[right]{$id_3 \otimes v'$}(4);
    \draw[->, bend left=30] (3) to node[left]{$id_3 \otimes v$}(4);
    \node[rotate=30] at (7, 1.1) {$\xRightarrow{id_3 \otimes \alpha}$};

    \draw[->] (4) to node[above]{$e_3$} (5);  
    
\end{tikzpicture}   
\end{equation}
Similar equations hold for 1-morphisms and the 2-morphism as below;
\begin{align}\label{eq:equations-currying}
    & e_2(id_2 \otimes v) = f,  e_2(id_2 \otimes v') = f',  id_{e_2} \circ(id_2 \otimes \alpha) = \theta 
\end{align}
The matrix form of the above equations for each morphism are the following, 
\begin{align*}
    & f = \begin{bmatrix}
        f_{11} & f_{12} \\ f_{21} & f_{22} \\ f_{31} & f_{32}
    \end{bmatrix}, f' = \begin{bmatrix}
         f'_{11} & f'_{12} \\ f'_{21} & f'_{22} \\ f'_{31} & f'_{32}
    \end{bmatrix}, \theta = \begin{bmatrix}
         \theta_{11} & \theta_{12} \\ \theta_{21} & \theta_{22} \\ \theta_{31} & \theta_{32}
    \end{bmatrix}
\end{align*}
where 1-morphism $e$, vector $v$ and $v'$ assigned to 1-morphisms, $f$ and $f'$ respectively, and vector $\alpha$ assigned to 2-morphism $\theta$ are written in Equation~\ref{eq:curring-matrices}. It is straightforward to examine the above equations, \ref{eq:equations-currying}, by direct substitution of matrices, Equation~\ref{eq:curring-matrices}.

\begin{equation}\label{eq:curring-matrices}
e = \begin{bmatrix}
1&0&0&0&0&0&0&1&0&0&0&0\\
0&0&1&0&0&0&0&0&0&1&0&0\\
0&0&0&0&1&0&0&0&0&0&0&1
\end{bmatrix}, v = \begin{bmatrix}
        f_{11} \\ f_{12} \\ f_{21} \\ f_{22} \\ f_{31}\\f_{32}\\0\\0\\0\\0\\0\\0\\0
    \end{bmatrix},  v' = \begin{bmatrix}
        f'_{11} \\ f'_{12} \\ f'_{21} \\ f'_{22} \\ f'_{31}\\f'_{32}\\0\\0\\0\\0\\0\\0\\0
    \end{bmatrix},\alpha = \begin{bmatrix}
        \theta_{11} \\ \theta_{12} \\ \theta_{21} \\ \theta_{22} \\ \theta_{31}\\\theta_{32}\\0\\0\\0\\0\\0\\0\\0
    \end{bmatrix}  
\end{equation}
Note that after vectorization of 2-morphisms, in general, we are faced with a column vector whose entries are matrices of various dimensions. The dimension depends on the domain and target of 2-morphism; for instance, $\theta_{11}: f_{11} \longrightarrow f_{11}'$ has a dimension $2 \times 3$ if $f_{11} = f_{11}^1 \oplus f_{11}^2 \oplus f_{11}^3$ and $f'_{11} = {f'}_{11}^1 \oplus {f'}_{11}^2$.
Hence, another internal vectorization happens for each entry of a 2-morphism. 
\[
\theta_{11} = \begin{bmatrix}
    \theta_{11}^1 & \theta_{11}^2 & \theta_{11}^3 \\
    \theta_{11}^4 & \theta_{11}^5 & \theta_{11}^6 
\end{bmatrix}
\]
Let us draw the diagram with details in the below. We stick to the dimension of source and target instead of representing 1-morphisms $f_{11}$ and $f'_{11}$ in the diagram since this representation facilitates observing the \textbf{internal currying} without losing anything specifically important. 
\begin{equation}\label{eq:currying2}
\begin{tikzcd} 
3 \times 2 &&& 3 \times (3 \times 2) \arrow[r, "e_k", Rightarrow] &2\\
1 \arrow[u, "\alpha", Rightarrow] &&& 3 \times 1 \arrow[u, "id_3 \otimes \alpha", Rightarrow] \arrow[ru, "\theta"{below}, Rightarrow] &
\end{tikzcd} 
\end{equation}
To summarise, vectorization in 2-categories follows the same principle as vectorization in categories. However, due to the existence of 2-morphisms, we encounter currying and vectorization at two levels; at the level objects, 1-morphisms and 2-morphisms summarised in Equation~\ref{eq:currying1} and at the level of 1-morphisms and 2-morphisms summarized in Equation~\ref{eq:currying2}. At level of objects, vectorization of 1-morphism and 2-morphisms follow the same principle, for each pair of vectorized 1-morphism, a vectorized 2-morphism is defined subsequently. After this level of vectorization, however, each of the entries of a 2-morphism still can have a matrix structure. Then, we vectorize 2-morphism entries with the second currying that we called \textbf{internal currying}. The internal vectorization happens in each Hom-category separately from other Hom-categories.

The recipe for vectorization in Hom-categories as presented above simply translate and represent what we had in categories. So the linear algebra algorithms using vectorization proposed by Macedo and Oliveira's carries inside our setting and Hom-categories, and all their results regarding typing of algorithms also straightforwardly translates to 1-morphisms and 2-morphisms of Hom-categories. 

\begin{rem}
In our setting, we have internal currying map for all Hom-categories; however, it is conceivable to have a 2-category that violates this. Meaning, having some Hom-categories without internal currying and vectorization. This 2-category would not be interesting from typing linear algebra perspective, but abstractly intriguing. 
\end{rem}

\section{Discussion}
In advancing the categorical framework for linear algebra, we introduced a notion of 2-biproducts within 2-categories. By proposing a limit form alongside algebraic definitions in a 2-category enriched over the 2-category of semiadditive categories, we established a definition that aligns with the 1-categorical understanding of biproducts. This structure is suitable for typing tensors with four indices as 2-morphisms and matrices as 1-morphisms. Through examples, we further illustrated the vectorization process for both 1-morphisms and 2-morphisms, underscoring the versatility of this approach.

A compelling avenue for future exploration lies in the implementation of these structures within functional programming languages such as Haskell. Furthermore, this framework holds potential for the categorical modeling of deep learning algorithms. The inherent composition and tensorial operations in this approach is similar to the architectures of modern machine learning, promising a bridge between category theory and deep learning.

The setup we considered was a (symmetric)monoidal semiadditive 2-category. For future research, we aim to investigate a more general setup of a Cartesian closed 2-category. This 2-category would have general properties and structures which we expect to change the currying we assumed for our 2-category. We, however, expect to recover our introduced correspondence from the general setup.  We leave the extension of this work to part II. 
\section{Acknowledgment}
I completed the main body of this work during my DPhil at the University of Oxford. I thank Jamie Vicary for some discussions and Carmen Constantin for proofreading the very first draft. The final draft was prepared at Imperial College London, and I thank Antoine Jack Jacquier for his support. I acknowledge support from UKRI; Grant Ref: EP/W032643/1. 

\bibliographystyle{elsarticle-num}

\bibliography{ref}

\end{document}